\documentclass[reqno]{amsart}
\usepackage{amssymb, amsthm, amsmath, xypic, mathrsfs}

\usepackage[pagewise]{lineno}%\linenumbers
\usepackage{color}

\calclayout

\usepackage{ifthen, xcolor}
\newboolean{commentBoolVar}
\setboolean{commentBoolVar}{true} % boolvar=true (SHOW COMMENTS) or false (HIDE COMMENTS) Need one of these in each document too
\newcommand{\colourcomment}[3]
{%
\ifthenelse{\boolean{commentBoolVar}}{{\color{#2}(#1: #3)}}{}%
}%

\theoremstyle{definition}
\newtheorem{theorem}{Theorem}[section]

\newtheorem{proposition}[theorem]{Proposition}
\newtheorem{lemma}[theorem]{Lemma}
\newtheorem{corollary}[theorem]{Corollary}

\newtheorem{remark}[theorem]{Remark}

\newcommand{\N}{\mathbb{N}}
\newcommand{\Z}{\mathbb{Z}}
\renewcommand{\bar}{\overline}

\DeclareMathOperator{\Perm}{\mbox{Perm}}
\DeclareMathOperator{\Hol}{\mbox{Hol}}
\DeclareMathOperator{\Aut}{\mbox{Aut}}

\DeclareMathOperator{\End}{\mbox{End}}

\newcommand{\id}{\mathrm{id}}

\begin{document}

\title[Bidihedral skew braces]{Bidihedral skew braces}

\author{Alan Koch}
\address{Department of Mathematics, Agnes Scott College, 141 E. College Ave., Decatur, GA 30030 USA}
\email{AKoch@agnesscott.edu}

\author{Paul J. Truman}
\address{School of Computer Science and Mathematics \\ Keele University \\ Staffordshire \\ ST5 5BG \\ UK}
\email{P.J.Truman@keele.ac.uk}

\subjclass[2020]{Primary 16T25; Secondary 20N99}

\keywords{Skew braces, Regular permutation groups, Hopf-Galois structures}

%\thanks{For the purpose of Open Access, the authors have applied a CC BY public copyright licence to any Author Accepted Manuscript (AAM) version arising from this submission.} 

\begin{abstract}
We classify skew braces with additive and multiplicative groups both isomorphic to the dihedral group $ D_{n} $. As a consequence, we obtain an alternative proof of the classification of Hopf-Galois structures of dihedral type on a dihedral Galois extension. 
\end{abstract}

\maketitle

\section{Introduction} \label{sec_introduction}

A \textit{skew (left) brace} \cite{GV17} is a triple $ (G,\cdot,\circ) $ in which $ (G,\cdot) $ and $ (G,\circ) $ are groups and the relation
\begin{equation} \label{eqn_brace_relation}
x \circ (y \cdot z) = (x \circ y) \cdot x^{-1} \cdot (x \circ z) 
\end{equation}
holds for all $ x,y,z \in G $. Here $ x^{-1} $ denotes the inverse of the element $ x $ in the \textit{additive group} $ G^{\bullet} = (G,\cdot) $; the inverse of $ x $ in the \textit{multiplicative group} $ G^{\circ} = (G,\circ) $ is denoted $ \bar{x} $. We suppress the notation $ \cdot $ wherever possible. 

Skew braces generalise \textit{braces} \cite{Ru07a}, which we may now view as skew braces with abelian additive group. Together, these objects provide an algebraic framework for constructing and studying solutions of the set-theoretic Yang-Baxter equation. They also have connections with a wide range of topics, including braids, racks, quandles,  and Hopf-Galois structures. There is therefore considerable interest in studying their structure and properties. In particular, numerous authors classify skew braces of given order or with given properties, up to the natural notion of isomorphism: a bijective function between skew braces respecting both operations. To name a few: Rump \cite{Ru07b} classifies braces with cyclic additive group, Bachiller \cite{Ba15} classifies braces of order $ p^{2} $ or $ p^{3} $ (with $ p $ prime), Acri and Bonatto \cite{AcB20} classify skew braces of order $ pq $ (with $ p,q $ distinct primes), Alabdali and Byott \cite{AB20c} classify skew braces of squarefree order, and Byott and Ferri \cite{BF25} classify braces with multiplicative group isomorphic to a dihedral group or generalised quaternion group. 

In this paper we classify skew braces $ (G,\cdot,\circ) $ such that $ G^{\bullet} \cong G^{\circ} \cong D_{n} $, the dihedral group of order $ 2n $; we call such skew braces \textit{bidihedral}. In particular, we prove

\begin{theorem} \label{thm_main_count}
Let $ n \in \N $ and let $ K_{n} = \{ k \in \Z_{n} \mid k^{2} \equiv 1 \pmod{n} \} $. Then the number of isomorphism classes of bidihedral skew braces of order $ 2n $ is 
\[ \mathrm{BD}_n = \left\{ \begin{array}{ll} 
|K_{n}| & \mbox{ if } n \equiv 1,3,5,7 \pmod{8} \\
2 |K_{n}| & \mbox{ if } n \equiv 2,4,6 \pmod{8} \\
\frac{5}{2} |K_{n}| & \mbox{ if } n \equiv 0 \pmod{8}.
\end{array} \right. \] 
\end{theorem}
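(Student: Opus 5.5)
We will use the standard correspondence between skew braces and regular subgroups of holomorphs. Fix $G = D_{n} = \langle r, s \mid r^{n} = s^{2} = 1,\ srs = r^{-1} \rangle$ as the additive group. Skew braces with additive group $G^{\bullet} = D_{n}$ correspond to regular subgroups of $\Hol(D_{n}) = D_{n} \rtimes \Aut(D_{n})$. Isomorphism classes of such skew braces correspond to conjugacy classes of these regular subgroups under $\Aut(D_{n})$. The task is therefore to classify the regular subgroups of $\Hol(D_{n})$ isomorphic to $D_{n}$, up to conjugation by $\Aut(D_{n}) \cong \Z_{n} \rtimes \Z_{n}^{\times}$, and count them.

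\textbf{Step 1: reduce to choosing images of the generators.} Write elements of $\Hol(D_{n})$ as pairs $(g,\alpha)$, with $\alpha_{u,v}: r \mapsto r^{u},\ s \mapsto r^{v}s$. A regular subgroup $N \cong D_{n}$ is generated by an element $R$ of order $n$ and an element $S$ of order $2$ with $SRS = R^{-1}$, and regularity forces $N$ to act transitively.

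\textbf{Step 2: locate $R$.} Let $H = \langle R \rangle$; it is cyclic of order $n$ and acts semiregularly. I will first show that the orbit of $1$ under $H$ is an index-$2$ subgroup of $D_{n}$, hence either $\langle r \rangle$ or, when $n$ is even, one of the dihedral subgroups $\langle r^{2}, s \rangle$ and $\langle r^{2}, rs \rangle$.
\begin{itemize}
\item In the generic case (orbit $\langle r \rangle$) I expect $R = (r^{a}, \alpha_{k,v})$ with $a$ a unit. A computation of the order and of the semiregularity condition should force $v = 0$ up to conjugation, with $k$ ranging over $K_{n}$. The condition $k^{2} = 1$ should come from requiring that $S$ invert $R$, since conjugation by $\alpha$ composed with the relation $SRS = R^{-1}$ forces the automorphism parts to commute in a way equivalent to $k^{2} \equiv 1$.
\item Then $S = (r^{b}s, \alpha_{w,c})$ with $w \in \{\pm1\}$-type constraints. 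The choices of $b$ and $c$ compatible with order $2$ and inversion should be pinned down up to conjugation, giving one or two classes per $k$.
\end{itemize}

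\textbf{Step 3: compute the conjugation action and count orbits.} I will conjugate by $\alpha_{u,v}$ to normalise $a = 1$ and to kill the translation parameters. I will then check which parameter sets remain inequivalent, sorting by the $2$-adic behaviour of $n$:
\begin{itemize}
\item For $n$ odd, only the generic family occurs, giving $|K_{n}|$ classes.
\item For $n$ even, a second family appears in which the $H$-orbit is dihedral, or the automorphism part involves $\alpha_{1,v}$ with $v = n/2$. This doubles the count.
\item For $8 \mid n$, extra elements of $K_{n}$ such as $k = n/2 \pm 1$ interact with $n/2$ so that some of the additional braces coincide while others split. The result is a further $|K_{n}|/2$ classes.
\end{itemize}

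\textbf{Main obstacle.} The hardest part is this last step. It requires controlling the extra family precisely when $n \equiv 0 \pmod 8$, where the involutions of $\Z_{n}^{\times}$ of the form $1 + n/2$ behave differently. It also requires verifying the orbit count under $\Aut(D_{n})$ rather than just counting subgroups. I would first count regular subgroups directly and then divide by stabilisers, using the fact that the number of subgroups in a class equals $|\Aut(D_{n})|/|\Aut_{\mathrm{Br}}|$. I would cross-check the result against the known Hopf--Galois count of dihedral-type structures on dihedral extensions (via Byott's translation) for small $n$, such as $n = 4$ and $n = 8$.
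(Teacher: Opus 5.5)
Your framework is sound: $\Aut(D_n)$-classes of regular subgroups $N\cong D_n$ of $\Hol(D_n)$ are exactly what the paper counts, via $N=\{(x,\gamma_x)\}$. But the proposal never carries out the classification, and the normal forms you predict in Step~2 are incorrect, so the counts asserted in Step~3 are unsupported.

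\textbf{The generator $R$.} Take $R=(r^{a},\alpha_{u,v})$ in the case where its orbit is $\langle r\rangle$. The translation part of $R^{j}$ is $r^{a(1+u+\cdots+u^{j-1})}$. Requiring these to be distinct for $0\le j<n$ (the bicyclic case of Rump's classification) forces $u\equiv1\pmod p$ for every prime $p\mid n$, and $u\equiv1\pmod 4$ if $4\mid n$. Since $S$ inverts $R$ and $\Z_n^{\times}$ is abelian, also $u^{2}\equiv1$. Hence $u\in\{1,1+n/2\}$, the second value only when $8\mid n$; $u$ does not range over $K_n$. For instance, with $n=8$ any $R=(r^{a},\alpha_{3,v})$ has $R^{4}=1$. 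Nor can $v$ be conjugated to $0$: in the paper's $\mathscr{A}_k$ one has $\gamma_{r^{i}}=\alpha_{1,(k-1)i}$, and every automorphism of $D_n$ normalises $N$. So as soon as $k\neq1$, $v\neq0$ for every generator and every conjugate.

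\textbf{Where $k$ lives.} The parameter $k\in K_n$ sits in the reflection, $S=(s,\beta_k)$. The condition $k^{2}\equiv1$ comes from $S^{2}=1$, not from $SRS=R^{-1}$. So the constraint on your $w$ is $w\in K_n$, not a `$\{\pm1\}$-type' condition, and this is the source of all $|K_n|$ classes in each family.

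\textbf{The missing structural step.} What you need is Proposition~\ref{prop_H}: for $n\ge3$, $\langle r\rangle$ is characteristic in $D_n$, hence $\gamma$-stable, hence an ideal. In your language, $N_0=N\cap(\langle r\rangle\rtimes\Aut(D_n))$ has index $2$ in $N$ and acts regularly on $\langle r\rangle$. The paper splits according to whether $N_0$ is cyclic or dihedral and invokes Rump and Byott--Ferri. It also needs a lemma excluding the two extra operations when $n=4m$ with $m$ odd. This is what pins down $\gamma$ on generators.

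You split instead on the orbit of the multiplicative cyclic subgroup, and give no reason why it should be an additive subgroup. It is one, but only as a by-product of the classification. Moreover, in the dihedral family the cyclic generator has a reflection as translation part, e.g.\ $(r^{-k}s,\alpha_{k,-1-k})$, which your ansatz $R=(r^{a},\ldots)$ never covers.

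\textbf{The isomorphism analysis.} This is also missing. One needs three facts:
\begin{enumerate}
\item all of $\Aut(D_n)$ fixes $\mathscr{A}_k$;
\item $\alpha$ carries $\mathscr{B}_k$ to $\mathscr{B}_{k+n/2}$, while the $\beta_\ell$ and $\alpha^{2}$ fix it, which is the origin of the factor $\tfrac12$;
\item $k$ is an invariant of $\mathscr{C}_k$.
\end{enumerate}
Your explanation of the case $8\mid n$ cannot be right as stated, since $1+n/2\in K_n$ already when $n\equiv4\pmod 8$. The real obstruction in that case is the condition $u\equiv1\pmod4$.

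\textbf{The case $n=2$.} The formula is false here. We have $D_2\cong V_4$, and the only regular $V_4$ in $\Hol(V_4)\cong S_4$ is the translation subgroup. So there is one class, not $2|K_2|=2$. Here $\langle r\rangle$ is not characteristic, and the `second family' coincides with the trivial brace. Your doubling step, like the statement itself, needs $n\neq2$.
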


Our results complement work of Kohl \cite{Ko20}, who studies the analogous question for Hopf-Galois structures; we explore the connections between his work and ours in Section \ref{sec_HGS}. 

The paper is organised as follows. In Section \ref{sec_skew_braces} we recall some fundamental definitions associated with skew braces, including $ \gamma $-functions and substructures such as \textit{ideals}. We find that a bidihedral skew brace $ (G,\cdot,\circ) $ necessarily contains a certain ideal $ (H,\cdot,\circ) $ of index $ 2 $ in which the group $ H^{\bullet} $ is cyclic and the group $ H^{\circ} $ is either cyclic or dihedral. This bifurcation forms the basis of our strategy: in each case we describe $ (H,\cdot,\circ) $ using existing classification results, and then investigate the consequences of the fact that $ (H,\cdot,\circ) $ is embedded as an ideal in a bidihedral skew brace $ (G,\cdot,\circ) $. We complete this process for $ H^{\circ} $ cyclic in Section \ref{sec_H_cyclic} and for $ H^{\circ} $ dihedral in Section \ref{sec_H_dihedral}. These sections culminate in a proof of Theorem \ref{thm_main_count}, and also provide explicit descriptions of representatives of the isomorphism classes of bidihedral skew braces. Finally, in Section \ref{sec_HGS} we use our classification to give an alternative approach to Kohl's work in Hopf-Galois theory mentioned above. 

\section{The $\gamma$-function and ideals of a skew brace} \label{sec_skew_braces}

If $ (G,\cdot,\circ) $ is a skew brace then for each $ x \in G $ the function $ \gamma_{x} : G \rightarrow G $ defined by $ \gamma_{x}(y) = x^{-1}(x \circ y) $ is an automorphism of $ G^{\bullet} $, and the function $ \gamma : G^{\circ} \rightarrow \Aut(G^{\bullet}) $ defined by $ x \mapsto \gamma_{x} $ is a homomorphism, called the \textit{$ \gamma $-function} of the skew brace. 

It follows immediately from the definition of the $ \gamma $-function that $ x \circ y = x \gamma_{x}(y) $ for all $ x,y \in G $; hence if $ (G,\cdot,\circ) $ is a skew brace then knowledge of $\gamma_x$ and either operation determines the other operation.

The $ \gamma $-function of a skew brace $ (G,\cdot,\circ) $ can also be used to characterise various substructures. A subset $ H $ of $ G $ is a called a \textit{subskew brace} to mean that it is closed with respect to both operations; this occurs if and only if it is a subgroup with respect to one of the operations with the additional property that $ \gamma_{x}(y) \in H $ for all $ x,y \in H $. A subskew brace $ H $ is called a \textit{left ideal} if it satisfies the stronger property $ \gamma_{x}(y) \in H $ for all $ x \in G $ and $ y \in H $. Finally, a left ideal $ H $ is called an  \textit{ideal} if it is normal with respect to both operations.  

Now we focus on bidihedral skew braces. We fix once and for all a presentation of the additive group $ G^{\bullet} $:
\begin{equation} \label{eqn_G_cdot_presentation}
G^{\bullet} = \langle r,s \mid r^{n} = s^{2} = e, \; srs^{-1}=r^{-1} \rangle \cong D_{n}. 
\end{equation}
Thus the elements $ r^{i} $ (with $ i=0, \ldots ,n-1 $) are the rotations, and the elements $ r^{i}s $ (with $ i=0, \ldots ,n-1 $) are the reflections, in $ G^{\bullet} $. 

We shall consider binary operations $ \circ $ on $ G $ such that $ (G,\cdot,\circ) $ is a bidihedral skew brace, and determine which of these operations yield isomorphic skew braces. If $ \circ $, $ \circ' $ are two such operations then an isomorphism of skew braces $ (G,\cdot,\circ) \rightarrow (G,\cdot,\circ') $ is a automorphism $ \theta \in \Aut(G^{\bullet}) $ such that $ \theta(x \circ y) = \theta(x) \circ' \theta(y) $ for all $ x,y \in G $;  conversely, given a skew brace $ (G,\cdot,\circ) $ and an automorphism $ \theta \in \Aut(G^{\bullet}) $ we may define $ \circ' $ by the rule $ \theta(x) \circ' \theta(y) = \theta(x \circ y) $; then $ (G,\cdot,\circ') $ is a skew brace and $ \theta : (G,\cdot,\circ) \rightarrow (G,\cdot,\circ') $ is an isomorphism of skew braces. 

We take this opportunity to recall some well-known properties of the group $ \Aut(G^{\bullet}) $, and to establish some notation. We have 
\[ \Aut(G^{\bullet}) \cong \Hol(\Z_{n}) \cong \Z_{n} \rtimes \Z_{n}^{\times} . \]
More precisely, $ \Aut(G^{\bullet}) $ is generated by the automorphism
\[ \alpha : r \mapsto r \mbox{ and } s \mapsto rs, \]
and the automorphisms
\[ \beta_{\ell} : r \mapsto r^{\ell} \mbox{ and } s \mapsto s \mbox{ with } (\ell,n)=1. \]

Earlier we noted that in a skew brace knowledge of any two of the additive operation, the multiplicative operation, or the $ \gamma $-function determines the third. Applying this observation in the context of a bidihedral skew brace $ (G,\cdot,\circ) $ (with $ G^{\bullet} $ specified as above) we obtain 

\begin{lemma} \label{lem_gamma_fn_generators}
Let $ G=(G,\cdot,\circ) $ be a bidihedral skew brace of order $ 2n $. Suppose that $ a,b \in G $ are elements that generate both $ G^{\bullet} $ and $ G^{\circ} $. Then the $ \gamma $-function of $ G $ is determined by the values
\[ \gamma_{a}(a), \; \gamma_{b}(a), \; \gamma_{a}(b), \; \gamma_{b}(b). \] 
\end{lemma}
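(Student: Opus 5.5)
Since $a,b$ generate $G^{\bullet}$, each $\gamma_x$ (an automorphism of $G^{\bullet}$) is determined by $\gamma_x(a)$ and $\gamma_x(b)$. Since $a,b$ generate $G^{\circ}$ and $\gamma : G^{\circ} \rightarrow \Aut(G^{\bullet})$ is a homomorphism, the whole map $\gamma$ is determined by $\gamma_a$ and $\gamma_b$. The proof just chains these two observations.

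\emph{Step 1.} The four values $\gamma_a(a), \gamma_a(b)$ determine $\gamma_a$ as an automorphism of $G^{\bullet}$. Indeed, any $y \in G$ is a word in $a,b$ and their $\cdot$-inverses, and $\gamma_a$ respects $\cdot$. Likewise $\gamma_b(a), \gamma_b(b)$ determine $\gamma_b$.

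\emph{Step 2.} Every $x \in G$ can be written as a $\circ$-word $x = c_1 \circ c_2 \circ \cdots \circ c_m$ with each $c_i \in \{a,b,\bar a,\bar b\}$. Since $\gamma$ is a homomorphism from $G^{\circ}$, we get $\gamma_x = \gamma_{c_1}\cdots\gamma_{c_m}$, with $\gamma_{\bar a} = \gamma_a^{-1}$ and $\gamma_{\bar b}=\gamma_b^{-1}$. So $\gamma_x$ is determined by $\gamma_a$ and $\gamma_b$.

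\emph{Where care is needed.} The word expressing $x$ as a $\circ$-product is itself computed in $G^{\circ}$, whose operation is not yet known. To avoid circularity, I will phrase the conclusion as a uniqueness statement. Suppose $\circ$ and $\circ'$ are two bidihedral skew brace structures on $G^{\bullet}$. Suppose $a,b$ generate both $G^{\bullet}$ and both multiplicative groups, and the four values agree. I will show the two structures coincide.

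Let $S = \{x \in G : \gamma_x = \gamma'_x\}$.
\begin{itemize}
\item $S$ contains $a$ and $b$ by Step 1.
\item $S$ is closed under $\circ$. If $x,y \in S$, then $x \circ y = x\gamma_x(y) = x\gamma'_x(y) = x \circ' y$, so $\gamma_{x\circ y} = \gamma_x\gamma_y = \gamma'_x\gamma'_y = \gamma'_{x \circ' y} = \gamma'_{x\circ y}$.
\end{itemize}
Since $G$ is finite, a subset containing $a,b$ and closed under $\circ$ contains the submonoid they generate, which is the subgroup $\langle a,b\rangle_{\circ} = G$. Hence $S = G$, so $\gamma = \gamma'$, and therefore $\circ = \circ'$ because $x \circ y = x\gamma_x(y)$.
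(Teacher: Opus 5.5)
Your proof is correct and follows the paper's argument. Because $a,b$ generate $G^{\bullet}$, each $\gamma_x$ is fixed by its values on $a$ and $b$; because $\gamma$ is a homomorphism on $G^{\circ}$, which $a,b$ generate, the whole map is fixed by $\gamma_a$ and $\gamma_b$. Your uniqueness reformulation with $S=\{x : \gamma_x=\gamma'_x\}$, together with finiteness, makes explicit a point the paper leaves implicit: the $\circ$-words in $a,b$ can themselves be computed from $\gamma_a,\gamma_b$ via $x\circ y = x\gamma_x(y)$, so the argument is not circular.
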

\begin{proof}
Since $ \gamma_{x} \in \Aut(G^{\bullet}) $ for each $ x \in G $, and $ a,b $ generate $ G^{\bullet} $, the values of $ \gamma_{x} $ are determined by $ \gamma_{x}(a) $ and $ \gamma_{x}(b) $. On the other hand, since the function $ \gamma : G^{\circ} \rightarrow \Aut(G^{\bullet}) $ is a homomorphism, the values of $ \gamma $ are determined by $ \gamma_{a} $ and $ \gamma_{b} $. 
\end{proof}

We also noted above that the $ \gamma $-function of a skew brace can be used to characterise substructures such as ideals. Applying this observation in the context of a bidihedral skew brace yields the following result, which underpins the strategy employed in the remainder of the paper. 

\begin{proposition} \label{prop_H}
Suppose that $ (G,\cdot,\circ) $ is a bidihedral skew brace of order $ 2n $. 
Let $ H = \langle r \rangle_{\bullet} $, the subgroup of $ G^{\bullet} $ generated by $ r $. Then $ H $ is an ideal of the skew brace $ (G,\cdot,\circ) $. 
\end{proposition}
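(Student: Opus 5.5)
The plan is to show in turn that $H$ is invariant under every $\gamma_x$, that it is closed under $\circ$, and that it is normal in both groups. Every step rests on the fact that $H=\langle r\rangle_\bullet$ is a characteristic subgroup of $G^\bullet$.

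\emph{Step 1: $H$ is characteristic in $G^{\bullet}$.} For $n \ge 3$, $H$ is the unique cyclic subgroup of order $n$ in $D_n$, since every element outside $H$ is a reflection of order $2$. Alternatively, one checks it directly on the generators $\alpha$ and $\beta_\ell$ of $\Aut(G^\bullet)$ recalled above: $\alpha(r)=r$ and $\beta_\ell(r)=r^\ell$ both lie in $H$. Since $\gamma_x \in \Aut(G^\bullet)$ for every $x \in G$, we get $\gamma_x(y) \in H$ for all $x\in G$ and $y\in H$. This is exactly the left ideal condition, once we know $H$ is a subskew brace.

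\emph{Step 2: $H$ is a subskew brace.} By the characterisation recalled in Section~\ref{sec_skew_braces}, it suffices that $H$ is a subgroup of $G^\bullet$ with $\gamma_x(y)\in H$ for $x,y\in H$, and Step 1 gives this. Concretely, $x\circ y = x\gamma_x(y)\in H$ for $x,y\in H$, so $H$ is closed under $\circ$. Because $H$ is finite, it is a subgroup of $G^\circ$, of order $n$. Together with Step 1, this shows $H$ is a left ideal.

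\emph{Step 3: normality.} $H$ has index $2$ in both $G^\bullet$ and $G^\circ$, so it is normal in both. If the paper's notion of ideal also requires $x\circ H = xH$, this follows as well. When $x\in H$ both cosets equal $H$. When $x\notin H$, both are the unique set of size $n$ disjoint from $H$, namely $G\setminus H$. Hence $H$ is an ideal.

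The main obstacle is Step 1 in degenerate small cases, and I would check these first. For $n=1$, $H$ is trivial and the statement is immediate. For $n=2$, $G^\bullet$ is the Klein four-group, whose automorphism group is $S_3$. Then $\langle r\rangle$ is no longer characteristic, and the isomorphism $\Aut(G^\bullet)\cong\Hol(\Z_n)$ also fails. In that case I would either show that some order-$2$ subgroup is stabilised by the image of $\gamma$ and relabel so that it is $\langle r\rangle$, or note that the paper implicitly assumes $n\ge 3$. For all $n\ge3$ the argument above goes through with no computation.
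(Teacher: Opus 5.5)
Your argument is correct and is essentially the paper's. $H$ is characteristic in $G^{\bullet}$, hence invariant under every $\gamma_x$, hence a subgroup of $G^{\circ}$, and it is normal in both groups because it has index $2$. Your caution about $n=2$ is justified, since the paper's ``unique cyclic subgroup of index $2$'' claim also fails for $V_4$. However, the statement as written fixes $r$, so relabelling would not prove it, and the paper's $n\ge3$ convention is not stated in it. A short argument closes the case directly. The image of $\gamma$ in $\Aut(V_4)\cong S_3$ has order at most $2$, so if nontrivial it is $\{\id,\tau\}$ for a transposition $\tau$ fixing a single nonidentity element $c$. Since $x\circ x=e$ forces $\gamma_x(x)=x$, only $x=c$ can have $\gamma_x=\tau$. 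Thus $|\ker\gamma|\ge 3$, so $\gamma$ is trivial, the skew brace is trivial, and every subgroup (in particular $\langle r\rangle$) is an ideal.
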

\begin{proof}
Since $ H^{\bullet} $ is the unique cyclic subgroup of index $ 2 $ in $ G^{\bullet} $, it is characteristic in $ G^{\bullet} $. Hence $ H^{\bullet} $ is normal in $ G^{\bullet} $ and, since $ \gamma_{x} \in \Aut(G^{\bullet}) $ for each $ x \in G $, we have $ \gamma_{x}(H)=H $ for each $ x \in G $. Hence $ H $ is also a subgroup of $ G^{\circ} $; it is normal since it has index $ 2 $. Therefore $ H $ is an ideal of $ (G,\cdot,\circ) $. 
\end{proof}

Since $ H^{\circ} $ is a subgroup of the dihedral group $ G^{\circ} $ it is either cyclic or dihedral. We consider these two cases separately over the next two sections. Each section is structured in the same way: we begin by assuming that $ (G,\cdot,\circ) $ is a bidihedral skew brace in which $ H^{\circ} $ has the given isomorphism class, use existing classification results to determine the structure of the ideal $ (H,\cdot,\circ) $, and exploit the fact that $ (H,\cdot,\circ) $ in embedded as an ideal in $ (G,\cdot,\circ) $ to derive necessary conditions on the $ \gamma $-function of $ (G,\cdot,\circ) $. We then construct explicit circle operations on $ G $ that realise each of these candidate $ \gamma $-functions, and determine the isomorphisms between the resulting skew braces. 

\section{Bidihedral skew braces with $ H^{\circ} $ cyclic} \label{sec_H_cyclic}

Let $ G^{\bullet} $ be given by the presentation \eqref{eqn_G_cdot_presentation}   and let $ H = \langle r \rangle_{\bullet} $ as in Proposition \ref{prop_H}.   We begin this section by supposing that $ (G,\cdot,\circ) $ is a bidihedral skew brace in which $ H^{\circ} $ is cyclic; later we will construct all bidihedral skew braces with this property. 

Finite skew braces in which the additive and multiplicative groups are both cyclic (\textit{bicyclic braces}) are classified by Rump \cite{Ru07b} (see also \cite{STr}). Applied  to $ H^{\bullet} $ (viewed in isolation), these results state that the binary operations $ \circ $ on $ H $ such that $ (H,\cdot,\circ) $ is a bicyclic brace are precisely those of the form 
\begin{equation} \label{eqn_rump_bicyclic}
r^{i} \circ r^{j} = r^{i+j+dij}, 
\end{equation}
where $ d $ is a divisor of $ n $ such that $ p \mid d $ for all prime numbers $ p \mid n $ and $ 4 \mid d $ if $ 4 \mid n $. 

Now we investigate the consequences of the fact that the bicyclic brace $ (H,\cdot,\circ) $ is embedded as an ideal in a bidihedral skew brace $ (G,\cdot,\circ) $.

\begin{lemma}
The element $ r $ has order $ n $ in $ G^{\circ} $, and $ s $ is a reflection in $ G^{\circ} $.
\end{lemma}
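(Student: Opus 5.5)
We work in the setting of this section, where $ H^{\circ} $ is cyclic and the ideal $ (H,\cdot,\circ) $ is a bicyclic brace, so by Rump's classification \eqref{eqn_rump_bicyclic} we have $ r^{i} \circ r^{j} = r^{i+j+dij} $ for a suitable divisor $ d $ of $ n $. The two claims are handled separately: the first from the bicyclic structure of $ H $, the second from the index-$ 2 $ structure of $ G^{\circ} $.

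\textbf{Order of $ r $.} Induction on $ k $ gives $ r^{\circ k} = r^{m_k} $ with $ m_1 = 1 $ and $ m_{k+1} = m_k + 1 + d m_k = (1+d)m_k + 1 $. Hence
\[ m_k = 1 + (1+d) + \cdots + (1+d)^{k-1}. \]
A cleaner route is to quote the standard fact in Rump's classification that $ r $ generates $ H^{\circ} $. To check this directly, note that $ r^{\circ k} = e $ exactly when $ n \mid m_k $. Reducing modulo a prime $ p \mid n $, and using $ p \mid d $, gives $ m_k \equiv k \pmod p $. So $ r^{\circ k} $ is a non-identity element of $ H $ unless $ p \mid k $ for every such $ p $. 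The cleanest argument is the $ p $-adic valuation statement $ v_p(m_k) = v_p(k) $ for each prime $ p \mid n $. This follows from the lifting-the-exponent lemma applied to $ m_k = ((1+d)^k - 1)/d $, since $ p \mid d $, and $ 4 \mid d $ whenever $ p = 2 $ and $ 4 \mid n $. The case $ p = 2 $ with $ 2 \,\|\, n $ is harmless, because there we only need $ m_k $ odd exactly when $ k $ is odd. It follows that $ n \mid m_k $ if and only if $ n \mid k $, so $ r $ has order exactly $ n $ in $ H^{\circ} $, and therefore in $ G^{\circ} $.

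\textbf{$ s $ is a reflection.} $ H^{\circ} $ is a cyclic subgroup of index $ 2 $ in the dihedral group $ G^{\circ} $. When $ n \geq 3 $ the rotation subgroup is the unique cyclic subgroup of index $ 2 $, so $ H $ is exactly the set of rotations of $ G^{\circ} $. Since $ s \notin H $, it is a reflection. For the small cases $ n \le 2 $, every non-identity element of $ D_n $ has order at most $ 2 $, so $ s $ has order $ 2 $ in $ G^{\circ} $ and the statement is trivial or a matter of convention: we take ``reflection'' to mean an element outside the chosen cyclic subgroup $ H^{\circ} $ of index $ 2 $.

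The main obstacle is the valuation computation in the first part. The rest follows directly from Proposition \ref{prop_H} and the structure of dihedral groups.
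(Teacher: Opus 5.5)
Your proof is correct and follows essentially the same route as the paper. Both use the induction $r^{\circ k} = r^{((1+d)^k-1)/d}$, then the prime-by-prime valuation $v_p\bigl(((1+d)^k-1)/d\bigr) = v_p(k)$ (the paper cites Simpson--Truman and Byott where you invoke lifting-the-exponent), and finally $s \notin H$ with $H^{\circ}$ the rotation subgroup. Your separate handling of $2 \,\|\, n$, where only parity matters, is slightly more careful than the paper's wording: there $c = 1+d \equiv 3 \pmod 4$, and the valuation equality the paper states actually fails for even $k$.
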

\begin{proof}
Arguing by induction we find that
\[ r^{\circ i} = r^{((1+d)^{i} - 1)/d} \mbox{ for all } i \in \N,  \]
and writing $ c=1+d $ we have
\[ \frac{ (1+d)^{i} - 1 }{d} = \frac{ c^{i} - 1 }{c - 1}. \]
We claim that $ (c^{i} - 1 )/(c - 1) \equiv 0 \pmod{n} $ if and only if $ i \equiv 0 \pmod{n} $. Let $ p^{e} $ be a maximal prime power dividing $ n $, and note that the conditions on $ d $ imply that $ c \equiv 1 \pmod{p} $ and that $ c \equiv 1 \pmod{4} $ if $ 4 \mid n $. Applying \cite[Lemma 3.1, part (ii)]{STr} in the case that $ p $ is odd and \cite[Proposition 7.1]{By07} in the case that $ p=2 $ we find that the maximal power of $ p $ dividing $ (c^{i} - 1)/(c-1) $ is equal to the maximal power of $ p $ dividing  $ v_{p}(i) $. Hence $(c^{i} - 1)/(c - 1) \equiv 0 \pmod{p^{e}} $ if and only if $ i \equiv 0 \pmod{p^{e}} $. Gathering all these prime powers together establishes the claim, and so $ r $ has order $ n $ in $ G^{\circ} $. 

Finally, since $ H^{\circ} $ is the subgroup of rotations in $ G^{\circ} $, and $ s \not \in H $, the element $ s $ is a reflection in $ G^{\circ} $. 
\end{proof}

\begin{proposition} \label{prop_bicylic_necessary} 
With the notation above, we have the following:
\begin{enumerate}
\item[]
\item The integer $ d $ satisfies
\[ d= \begin{cases} 
n \mbox{ or } n/2 & \mbox{if } 8 \mid n \\
n & \mbox{otherwise;} \end{cases} \]
\item The $ \gamma $-function of $ G $ satisfies
\begin{align*}
\gamma_{s}(r) &= r^{k} \mbox{ with } k^{2} \equiv 1 \pmod{n} \\
\gamma_{s}(s) &= s \\
\gamma_{r}(r) &= r^{1+d} \\
\gamma_{r}(s) &= r^{(1+d)k-1}s,
\end{align*}
and is determined by these values. 
\end{enumerate}
\end{proposition}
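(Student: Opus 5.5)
We read off the $\gamma$-values on the generators $r,s$ from the structure of the ideal $(H,\cdot,\circ)$ and from the dihedral relations in $G^{\circ}$. We then use the fact that $\gamma$ is a homomorphism to constrain $d$ and $k$. Since $r,s$ generate $G^{\bullet}$ by \eqref{eqn_G_cdot_presentation}, and generate $G^{\circ}$ by the preceding lemma ($r$ has order $n$ in $G^{\circ}$, $s$ is a reflection), Lemma \ref{lem_gamma_fn_generators} with $a=r$, $b=s$ shows that the four listed values determine $\gamma$. So it remains to compute these values and to pin down $d$.

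\emph{Values on $r$ and $s$.} From \eqref{eqn_rump_bicyclic} we get $r\circ r^{j} = r^{1+j+dj}$, so $\gamma_r(r^j)=r^{j(1+d)}$ and in particular $\gamma_r(r)=r^{1+d}$. Since $\gamma_s\in\Aut(G^{\bullet})$, we may write $\gamma_s(r)=r^{k}$ with $(k,n)=1$ and $\gamma_s(s)=r^{m}s$.
\begin{itemize}
\item[(a)] Because $s$ is a reflection in $G^{\circ}$, we have $e=s\circ s=s\,\gamma_s(s)=s r^{m}s=r^{-m}$, so $m=0$ and $\gamma_s(s)=s$.
\item[(b)] Because $\gamma$ is a homomorphism, $\gamma_s^2=\gamma_{s\circ s}=\mathrm{id}$, and evaluating at $r$ gives $k^{2}\equiv 1 \pmod n$.
\item[(c)] Write $\gamma_r(s)=r^{t}s$. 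The dihedral relation in $G^{\circ}$ gives $r\circ s = s\circ \bar r$. Here $\bar r=r^{j}$ with $j(1+d)\equiv -1 \pmod n$, using the formula for $r\circ r^{j}$. Comparing $r\circ s=r^{1+t}s$ with $s\circ r^{j}=s r^{jk}=r^{-jk}s$ yields $t\equiv k(1+d)^{-1}-1 \pmod n$.
\end{itemize}

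\emph{The constraint on $d$.} Conjugating by $s$ in $G^{\circ}$ gives $s\circ r\circ s=\bar r$. Applying $\gamma$ gives $\gamma_s\gamma_r\gamma_s=\gamma_r^{-1}$. Evaluating at $r$ and using $k^{2}\equiv 1$, the left side gives $r^{1+d}$ and the right side gives $r^{(1+d)^{-1}}$, so $(1+d)^{2}\equiv 1$, that is, $d(d+2)\equiv 0 \pmod n$. Write $n=dm$; then $m\mid d+2$. Every prime dividing $m$ divides $n$, hence divides $d$ by Rump's condition, hence divides $2$; so $m=2^{a}$.
\begin{itemize}
\item If $a=0$ then $d=n$.
\item If $a\ge 1$ then $n$ is even. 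Suppose $d\equiv 2 \pmod 4$. Then $n=2^{a}d$ is divisible by $4$, which forces $4\mid d$, a contradiction. So $4\mid d$, hence $d+2\equiv 2 \pmod 4$, which forces $a=1$. Then $n=2d$ with $4\mid d$, so $8\mid n$ and $d=n/2$.
\end{itemize}
This proves part (1). In both cases $d^{2}\equiv 0$ and $2d\equiv 0 \pmod n$. Hence $(1+d)^{-1}=1-d$ and $k(1-d)-1\equiv k(1+d)-1$, so $\gamma_r(s)=r^{(1+d)k-1}s$, as claimed.

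The step that needs the most care is extracting $d\in\{n,n/2\}$ from $(1+d)^2\equiv 1$, where Rump's conditions on $d$ do the work. The remaining computations only require keeping straight whether inverses are taken in $G^{\bullet}$ or in $G^{\circ}$.
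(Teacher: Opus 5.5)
Your proposal is correct and follows essentially the same route as the paper. It makes the same computations: $\gamma_r(r)$ from Rump's formula, $\gamma_s(s)=s$ from $s\circ s=e$, $k^2\equiv 1$ from $\gamma_s^2=\mathrm{id}$, $(1+d)^2\equiv 1 \pmod n$ from the dihedral relation between $r$ and $s$ in $G^{\circ}$, and finally the simplification of $\gamma_r(s)$ via $r\circ s=s\circ\bar r$ once $d\in\{n,n/2\}$ is known. Your bookkeeping is somewhat more explicit than the paper's in three places: the $n=dm$, $m\mid d+2$ argument that pins down $d$, the observation that $2d\equiv 0 \pmod n$ (not just $d^2\equiv 0$) is needed to replace $k(1+d)^{-1}$ by $k(1+d)$, and the appeal to Lemma \ref{lem_gamma_fn_generators} for the ``determined by these values'' clause.
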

\begin{proof}
We prove both parts together. 

First, the formula \eqref{eqn_rump_bicyclic} implies immediately that
\[ \gamma_{r}(r) = r^{-1}(r \circ r) = r^{-1}r^{2+d} = r^{1+d}. \]

Second, since $ s $ is a reflection $ G^{\circ} $ we have 
\[ e = s \circ s = s \gamma_{s}(s), \]
so $ \gamma_{s}(s)= s^{-1} = s $.

Third, since $ H $ is an ideal of $ G $ the automorphism $ \gamma_{s} $ of $ G^{\bullet} $ restricts to an automorphism of $ H^{\bullet} $, so we have $ \gamma_{s}(r) = r^{k} $ for some $ k $ coprime to $ n $. Hence $ \gamma_{s}^{2}(r) = \gamma_s(r^k)=r^{k^{2}} $. But $ s \circ s = e $, so $ \gamma_{s}^{2} =  \gamma_{s \circ s} = \gamma_{e} = \id $, and so $ \gamma_{s}^{2}(r) = r $. That is, $ k^{2} \equiv 1 \pmod{n} $. 

Next we establish the stated properties of $ d $. Since $ r $ is a rotation in $ G^{\circ} $ and $ s $ is a reflection in $ G^{\circ} $ we have $ s \circ r = \bar{r} \circ s $, so $ \gamma_{s}\gamma_{r}(r) = \gamma_{r}^{-1}\gamma_{s}(r) $; that is, $ r^{k(1+d)} = r^{k(1+d)^{-1}} $. Hence $ (1+d)^{2} \equiv 1 \pmod{n} $, and so $ d(2+d) \equiv 0 \pmod{n} $. Now write $ n=2^{f}m $ with $ m $ odd and recall that $ p \mid d $ for all $ p \mid n $. We find that $ d \equiv 0 \pmod{m} $ and $ d \equiv 0 \pmod{2^{f}} $ or $ d \equiv 2^{f-1} \pmod{2^{f}} $. Combined with the fact that $ 4 \mid d $ if $ 4 \mid n $, we obtain the stated conditions on $ d $. 

Finally we consider $ \gamma_{r}(s) $. Beginning with the definition, we have
\[ \gamma_{r}(s) = r^{-1}(r \circ s) = r^{-1}(s \circ \bar{r}) = r^{-1}s \gamma_{s}(\bar{r}). \]

The conditions we have established on $ d $ imply that $ d^{2} \equiv 0 \pmod{n} $; it follows that $ \bar{r} = r^{-(1+d)} $, and so 
\[ \gamma_{r}(s) = r^{-1}s \gamma_{s}(r)^{-(1+d)} = r^{-1}s r^{-(1+d)k} = r^{(1+d)k-1}s, \]
as claimed. 
\end{proof}

Proposition \ref{prop_bicylic_necessary} gives necessary conditions for the values of the $ \gamma $-function of a bidihedral skew brace $ (G,\cdot,\circ) $ in which $ H^{\circ} $ is cyclic. In the remainder of this section we construct binary operations $ \circ $ on $ G $ that realise each of these candidate $ \gamma $-functions, and determine the isomorphism classes of the resulting skew braces. 

We recall from earlier the notation 
\[ K = K_{n} = \{ k \in \Z_{n} \mid k^{2} \equiv 1 \pmod{n} \}, \]
and the definitions of the automorphisms $ \alpha $ and $ \beta_{\ell} $ (with $ (\ell,n)=1 $) in $ \Aut(G^{\bullet}) $:
\[ \alpha : r \mapsto r \mbox{ and } s \mapsto rs, \]
and 
\[ \beta_{\ell} : r \mapsto r^{\ell} \mbox{ and } s \mapsto s. \] 

We begin by showing that we may realise all the candidate $ \gamma $-functions found in Proposition \ref{prop_bicylic_necessary} in the case $ d=n $. 

\begin{proposition} \label{prop_d_n_realise}
Let $ k \in K_{n} $ and define $ \circ $ on $ G^{\bullet} $ by
\[ r^{i}s^{u} \circ r^{j}s^{v} = \left( r^{k^{v}i}s^{u} \right)\left( r^{k^{u}j}s^{v} \right). \]
Then $ \mathscr{A}_{k} = (G,\cdot,\circ) $ is a skew brace that realises the conditions described in Proposition \ref{prop_bicylic_necessary} with $ d=n $. 
\end{proposition}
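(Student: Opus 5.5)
Throughout I write the operation as $x \circ y = x\,\gamma_x(y)$, where $\gamma_x$ is a candidate $\gamma$-function read off from the formula. Expanding the definition gives
\[ r^{i}s^{u} \circ r^{j}s^{v} = r^{k^{v}i}s^{u}r^{k^{u}j}s^{v} = r^{k^{v}i + (-1)^{u}k^{u}j}s^{u+v}. \]
This shows that $\circ$ is well defined, since $k^{2}\equiv 1 \pmod n$ means $k^{u}, k^{v}$ depend only on $u,v \bmod 2$. The cleanest route is to avoid checking the brace relation \eqref{eqn_brace_relation} directly. Instead I will identify $G^{\circ}$ with an explicit regular subgroup of $\Hol(G^{\bullet})$, or equivalently verify two things: $(G,\circ)$ is a group, and $\gamma_x(y) := x^{-1}(x\circ y)$ is an automorphism of $G^{\bullet}$ with $\gamma_{x\circ y} = \gamma_x\gamma_y$. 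These two conditions together are equivalent to $(G,\cdot,\circ)$ being a skew brace, because \eqref{eqn_brace_relation} is exactly $\gamma_x(yz) = \gamma_x(y)\gamma_x(z)$.

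The first step is to compute $\gamma_x$. For $x = r^{i}s^{u}$ we get
\[ \gamma_x(r^{j}s^{v}) = (r^{i}s^{u})^{-1} r^{k^{v}i}s^{u} r^{k^{u}j}s^{v}. \]
For $v=0$ this reduces to $r^{k^{u}j}$, so $\gamma_x$ acts on $H$ as $\beta_{k^{u}}$. For $v=1$ the value is $(r^{i}s^{u})^{-1} r^{ki}s^{u} r^{k^{u}j}s$, which simplifies to $r^{\pm(k-1)i}\,r^{k^{u}j}s$. So $\gamma_x = \alpha^{c}\beta_{k^{u}}$ for an explicit shift $c = c(i,u)$ that is linear in $i$. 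In particular each $\gamma_x$ lies in $\Aut(G^{\bullet})$.

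The second step is to show that $\gamma$ is a homomorphism. Using the composition rule $\alpha^{a}\beta_{\ell}\alpha^{b}\beta_{m} = \alpha^{a+\ell b}\beta_{\ell m}$, this reduces to a cocycle identity on the pairs $(c(i,u), k^{u})$. The identity then follows from the multiplication formula above together with $k^{2}=1$. Granting this, the existence of identity and inverses is automatic: $e\circ y = y = y \circ e$, and $x\circ y$ is a bijection in $y$. Associativity also follows: $x\circ(y\circ z) = x\gamma_x(y\gamma_y(z)) = (x\gamma_x(y))\gamma_{x\circ y}(z)$. Hence $(G,\circ)$ is a group and $\mathscr{A}_k$ is a skew brace.

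The third step is to check that $G^{\circ}\cong D_n$ and that the $\gamma$-values match Proposition \ref{prop_bicylic_necessary} with $d=n$:
\begin{itemize}
\item $r^{i}\circ r^{j} = r^{i+j}$, so $H^{\circ}$ is cyclic of order $n$ generated by $r$, and $\gamma_r(r)=r=r^{1+d}$ since $d=n$.
\item $s\circ s = s^{2} = e$, and $\gamma_s(s)=s$.
\item $\gamma_s(r) = r^{k}$.
\item $\gamma_r(s) = r^{-1}(r\circ s) = r^{-1}r^{k}s = r^{k-1}s$, which is the required $r^{(1+d)k-1}s$.
\item $s\circ r\circ s = s\circ(r^{-k}s) = r^{-1}$, which is the $\circ$-inverse of $r$. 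So $G^{\circ} = \langle r,s\rangle_{\circ}$ satisfies the dihedral relations and has order $2n$, hence $G^{\circ}\cong D_n$.
\end{itemize}
By Lemma \ref{lem_gamma_fn_generators}, applied with $a=r$ and $b=s$, these four values determine the $\gamma$-function, so $\mathscr{A}_k$ realises the stated conditions.

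The main obstacle is the second step: the sign bookkeeping in the cocycle identity for $c(i,u)$. If that becomes messy, I will instead verify $\gamma_{x\circ y}=\gamma_x\gamma_y$ only on generators, by checking that $x\mapsto\gamma_x$ is constant on the relevant cosets, or I will confirm \eqref{eqn_brace_relation} directly by a case split on $u,v,w\in\{0,1\}$.
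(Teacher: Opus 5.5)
Your proof is correct, but it is organised differently from the paper's. The paper rewrites the operation as $x\circ y=\beta_k^{v}(x)\beta_k^{u}(y)$. In that symmetric form associativity takes two lines: $x\circ(y\circ z)=\beta_k^{v+w}(x)\beta_k^{u+w}(y)\beta_k^{u+v}(z)=(x\circ y)\circ z$. Inverses are visibly $\bar x=x^{-1}$. The paper then checks the brace relation by hand, using $k^{2}\equiv 1$ in the case $v=w=1$.

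You instead use the $\gamma$-function criterion. Computing $\gamma_x=\alpha^{(-1)^{u}(k-1)i}\beta_{k^{u}}$ puts every $\gamma_x$ in $\Aut(G^{\bullet})$, which makes the brace relation automatic. The group axioms then reduce to the single cocycle identity $\gamma_{x\circ y}=\gamma_x\gamma_y$ in $\Hol(\Z_n)$. You leave that identity unexpanded, but it does hold. Via $\alpha^{a}\beta_{\ell}\alpha^{b}\beta_{m}=\alpha^{a+\ell b}\beta_{\ell m}$ it amounts to
\[ (-1)^{u+v}(k-1)\bigl(k^{v}i+(-1)^{u}k^{u}j\bigr)\equiv(-1)^{u}(k-1)i+(-1)^{v}k^{u}(k-1)j \pmod{n}. \]
The $j$-terms agree identically. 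The $i$-terms reduce to $(-1)^{v}k^{v}(k-1)\equiv k-1$, which for $v=1$ reads $k-k^{2}\equiv k-1$.

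Your route produces the entire $\gamma$-function explicitly, which is exactly what the comparison with Proposition~\ref{prop_bicylic_necessary} requires. It also isolates the one place where $k^{2}\equiv1$ matters. The paper's route avoids computing in the holomorph and gets associativity almost for free, at the cost of a separate direct check of the brace relation.

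One small slip: in the dihedral check you write $s\circ r\circ s=s\circ(r^{-k}s)$. In fact $r\circ s=r^{k}s$, and $s\circ(r^{-k}s)=sr^{-1}s=r$. The correct bracketing is $(s\circ r)\circ s=(r^{-k}s)\circ s=r^{-k^{2}}=r^{-1}$, so your conclusion $s\circ r\circ s=\bar r$ still stands.
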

\begin{proof}
Since $ k^{2} \equiv 1 \pmod{n} $ the automorphism $ \beta_{k} $ has order $ 2 $ in $ \Aut(G^{\bullet}) $; using this automorphism we may rewrite the definition of $ \circ $ as
\[ r^{i}s^{u} \circ r^{j}s^{v} = \beta_{k}^{v}(r^{i}s^{u})\beta_{k}^{u}(r^{j}s^{v}). \]
We see immediately that $ e $ is an identity element with respect to $ \circ $ and that inverses are given by $ \bar{r^{i}s^{u}} = (r^{i}s^{u})^{-1} $. 

Next we show that $ \circ $ is associative on $ G $. To ease notation, let $ x=r^{i}s^{u} $, $ y=r^{j}s^{v} $, and $ z=r^{\ell}s^{w} $. We note that $ \beta_{k} $ has no effect on the power of $ s $ appearing in a group element (for example $ \beta_{k}(x) = r^{ki}s^{u} $) and also that the group operation in $ G^{\bullet} $ results in these powers being added modulo $ 2 $. We find 
\begin{align*}
x \circ (y \circ z) &= x \circ \beta_{k}^{w}(y)\beta_{k}^{v}(z) \\
&= \beta_{k}^{v+w}(x)\beta_{k}^{u}(\beta_{k}^{w}(y)\beta_{k}^{v}(z)) \tag{the power of $ s $ in $ \beta_{k}^{w}(y)\beta_{k}^{v}(z) $ is $ v+w $} \\
&= \beta_{k}^{v+w}(x)\beta_{k}^{u+w}(y)\beta_{k}^{u+v}(z) \\
&= \beta_{k}^{w}(\beta_{k}^{v}(x)\beta_{k}^{u}(y))\beta_{k}^{u+v}(z) \\
&= \beta_{k}^{v}(x)\beta_{k}^{u}(y) \circ z \tag{the power of $ s $ in $ \beta_{k}^{v}(x)\beta_{k}^{u}(y) $ is $ u+v $}\\
&=(x \circ y) \circ z;
\end{align*} 
thus $ \circ $ is associative on $ G $, and so $ G^{\circ} $ is a group. To show that it is dihedral, we note that we have $ r^{i} \circ r^{j} = r^{i+j} $ for all $ i,j $, so $ r $ has order $ n $ in $ G^{\circ} $, and also that $ s \circ s = e $ and that $ s \circ r = sr^{k} = r^{-k}s = r^{-1} \circ s $. 

Finally, to show that the brace relation is satisfied we begin with the right hand side:
\begin{align*}
(x \circ y)x^{-1}(x \circ z) &= \beta_{k}^{v}(x)\beta_{k}^{u}(y)x^{-1}\beta_{k}^{w}(x)\beta_{k}^{u}(z) \\
&= \beta_{k}^{v}(x)\beta_{k}^{u}(y)x^{-1}\beta_{k}^{w}(x)\beta_{k}^{u}(y)^{-1} \beta_{k}^{u}(y)\beta_{k}^{u}(z).
\end{align*}
Now $ x^{-1}\beta_{k}^{w}(x) = r^{(-1)^{u}(k^{w}-1)i} $, so $ \beta_{k}^{u}(y)x^{-1}\beta_{k}^{w}(x)\beta_{k}^{u}(y)^{-1} = r^{(-1)^{v}(-1)^{u}(k^{w}-1)i} $. We claim that this is equal to $ r^{k^{v}(-1)^{u}(k^{w}-1)i} = \beta_{k}^{v}(x^{-1}\beta_{k}^{w}(x)) $. This is certainly true if $ v=0 $ or $ w=0 $. If $ v=w=1 $ then we have $ (-1)^{v}(k^{w}-1) \equiv 1-k \pmod{n} $ and $ k^{v}(k^{w}-1) \equiv k^{2}-k \equiv 1 - k \pmod{n} $ since $ k^{2} \equiv 1 \pmod{n} $. Hence we have
\begin{align*}
(x \circ y)x^{-1}(x \circ z) &=\beta_{k}^{v}(x)\beta_{k}^{v}(x^{-1}\beta_{k}^{w}(x)) \beta_{k}^{u}(y)\beta_{k}^{u}(z) \\
&= \beta_{k}^{v+w}(x)\beta_{k}^{u}(yz) \\
&= x \circ (yz),
\end{align*}
and so  $ (G,\cdot,\circ) $ is a skew brace. We verify that this skew brace satisfies the conditions described in Proposition \ref{prop_bicylic_necessary}, with $ d=n $:
\begin{align*}
\gamma_{s}(r) &= s^{-1}(s \circ r) = s(sr^{k}) = r^{k}; \\
\gamma_{s}(s) &= s^{-1}(s \circ s) = s; \\
\gamma_{r}(r) &= r^{-1}(r \circ r) = r; \\
\gamma_{r}(s) &= r^{-1}(r \circ s) = r^{-1}(r^{k}s) = r^{k-1}s.
\end{align*}
\end{proof}

Next we study the isomorphism classes of the skew braces $ \mathscr{A}_{k} $ constructed in Proposition \ref{prop_d_n_realise}. 

\begin{proposition} \label{prop_d_n_isomorphism}
For $ k \in K_{n} $, every automorphism of $ G^{\bullet} $ is a skew brace automorphism of $ \mathscr{A}_{k} $. 
\end{proposition}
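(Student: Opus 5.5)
Since $ \Aut(G^{\bullet}) $ is generated by $ \alpha $ and the $ \beta_{\ell} $ with $ (\ell,n)=1 $, and skew brace automorphisms of $ \mathscr{A}_{k} $ form a group, it is enough to check that $ \alpha $ and each $ \beta_{\ell} $ preserve $ \circ $. Each is already an automorphism of $ G^{\bullet} $, so the only remaining condition is $ \theta(x \circ y) = \theta(x) \circ \theta(y) $ for all $ x,y \in G $.

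The key structural observation is the one used in the proof of Proposition \ref{prop_d_n_realise}. For $ x = r^{i}s^{u} $ and $ y = r^{j}s^{v} $ we have
\[ x \circ y = \beta_{k}^{v}(x)\beta_{k}^{u}(y), \]
where the exponents $ u,v $ are the $ s $-parities of $ y $ and $ x $. Every automorphism $ \theta $ of $ G^{\bullet} $ preserves $ H $, so it preserves the parity map $ G \to \Z_{2} $; hence $ \theta(x) $ and $ \theta(y) $ again have parities $ u $ and $ v $. Therefore
\[ \theta(x) \circ \theta(y) = \beta_{k}^{v}(\theta(x))\,\beta_{k}^{u}(\theta(y)), \]
while
\[ \theta(x \circ y) = \theta\beta_{k}^{v}(x)\,\theta\beta_{k}^{u}(y). \]
So the statement follows once we show that $ \theta $ commutes with $ \beta_{k} $ on the relevant elements.

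\emph{The case $ \theta = \beta_{\ell} $.} The automorphisms $ \beta_{\ell} $ and $ \beta_{k} $ commute, since both fix $ s $ and act on $ r $ by multiplication of exponents in the abelian group $ \Z_{n}^{\times} $. This case is therefore immediate.

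\emph{The case $ \theta = \alpha $.} This is the main obstacle, since $ \alpha $ need not commute with $ \beta_{k} $: we have $ \beta_{k}\alpha(s) = r^{k}s $ but $ \alpha\beta_{k}(s) = rs $. I would instead verify the identity directly by cases on $ u,v \in \{0,1\} $, using $ \alpha(r^{i}s^{u}) = r^{i+u}s^{u} $.
\begin{itemize}
\item If $ u = v = 0 $, both sides equal $ r^{i+j} $.
\item If $ u = 0 $ and $ v = 1 $, then $ \alpha(x \circ y) = \alpha(r^{ki+j}s) = r^{ki+j+1}s $, and $ \alpha(x) \circ \alpha(y) = r^{i} \circ r^{j+1}s = r^{ki+j+1}s $.
\item If $ u = 1 $ and $ v = 0 $, then $ x \circ y = r^{i}s\,r^{kj} = r^{i-kj}s $, so $ \alpha(x \circ y) = r^{i-kj+1}s $. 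On the other side, $ r^{i+1}s \circ r^{j} = r^{i+1}s\,r^{kj} = r^{i+1-kj}s $, which agrees.
\item If $ u = v = 1 $, then $ x \circ y = r^{ki}s\,r^{kj}s = r^{k(i-j)} $. On the other side, $ r^{i+1}s \circ r^{j+1}s = r^{k(i+1)}s\,r^{k(j+1)}s = r^{k(i-j)} $, and $ \alpha $ fixes this rotation.
\end{itemize}
In each case the two sides agree, so $ \alpha $ is a skew brace automorphism of $ \mathscr{A}_{k} $.

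Notably, the relation $ k^{2} \equiv 1 $ is not needed in this check, only in establishing that $ \mathscr{A}_{k} $ is a skew brace at all. Combining the two cases with the generation statement completes the proof.
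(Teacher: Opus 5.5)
Your proof is correct and follows essentially the same route as the paper. Both reduce to the generators $\alpha$ and $\beta_{\ell}$, dispose of $\beta_{\ell}$ because it commutes with $\beta_{k}$ and preserves $s$-parity, and verify $\alpha$ directly; the paper compresses your four-case check into one comparison of the prefactors $r^{u+(-1)^{u}v}$ and $r^{uk^{v}+(-1)^{u}vk^{u}}$, which agree for each $u,v \in \{0,1\}$ exactly as your computations show.
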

\begin{proof}
We show that the automorphisms $ \alpha $ and $ \beta_{\ell} $ (with $ (\ell,n)=1 $) of $ G^{\bullet} $ also respect $ \circ $. As in the proof of Proposition \ref{prop_d_n_realise} we let $ x=r^{i}s^{u} $, $ y=r^{j}s^{v} $, and describe $ \circ $ via the automorphism $ \beta_{k} $. 

First consider an automorphism of the form $ \beta_{\ell} $, with $ (\ell,n)=1 $. We find:
\begin{align*}
\beta_{\ell}( x \circ y ) &= \beta_{\ell} (\beta^{v}(x)\beta^{u}(y)) \\
&= \beta^{v}\beta_{\ell}(x)\beta^{u}\beta_{\ell}(y) \\
&= \beta_{\ell}(x) \circ \beta_{\ell}(y). 
\end{align*}
Hence each $ \beta_{\ell} $ respects $ \circ $. 

Next consider the automorphism $ \alpha $. We find
\begin{align*}
\alpha( x \circ y ) &= \alpha (\beta^{v}(x)\beta^{u}(y)) \\
&= r^{u+(-1)^{u}v} (\beta^{v}(x)\beta^{u}(y)),
\end{align*}
whereas 
\begin{align*}
\alpha( x ) \circ \alpha( y ) &= \beta_{k}^{v}\alpha(x)\beta_{k}^{u}\alpha(y) \\
&= \beta_{k}^{v}( r^{u}x )\beta_{k}^{u}( r^{v} y ) \\
&= r^{uk^{v} + (-1)^{u}vk^{u}} \beta_{k}^{v}( x )\beta_{k}^{u}( y ). 
\end{align*}
Comparing $ r^{u+(-1)^{u}v}  $ with $ r^{uk^{v} + (-1)^{u}vk^{u}} $, and recalling that each of $ u,v $ can only take the values $ 0,1 $, we see that $ \alpha $ also respects $ \circ $. 
\end{proof}

\begin{corollary}
For $ k,k' \in K_{n} $ we have $ \mathscr{A}_{k'} \cong \mathscr{A}_{k} $ if and only if $ k'=k $. 
\end{corollary}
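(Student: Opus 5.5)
The ``if'' direction is trivial, so the work is the ``only if'' direction. The plan is to combine the setup from Section \ref{sec_skew_braces} with Proposition \ref{prop_d_n_isomorphism}. Recall that $ \mathscr{A}_{k} $ and $ \mathscr{A}_{k'} $ share the additive group $ G^{\bullet} $. An isomorphism $ \mathscr{A}_{k} \to \mathscr{A}_{k'} $ is therefore an automorphism $ \theta \in \Aut(G^{\bullet}) $ with $ \theta(x \circ_{k} y) = \theta(x) \circ_{k'} \theta(y) $ for all $ x,y $.

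By Proposition \ref{prop_d_n_isomorphism}, $ \theta $ is also a skew brace automorphism of $ \mathscr{A}_{k'} $. Hence $ \theta^{-1} $ respects $ \circ_{k'} $. Composing, $ \id = \theta^{-1}\theta $ is an isomorphism $ \mathscr{A}_{k} \to \mathscr{A}_{k'} $. Concretely,
\[ x \circ_{k} y = \theta^{-1}(\theta(x) \circ_{k'} \theta(y)) = x \circ_{k'} y \]
for all $ x,y \in G $, so the two circle operations coincide.

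It then remains to read $ k $ off from the operation. By the defining formula in Proposition \ref{prop_d_n_realise},
\[ s \circ_{k} r = s r^{k}, \]
and similarly $ s \circ_{k'} r = s r^{k'} $. Equality of the operations forces $ r^{k} = r^{k'} $ in $ G^{\bullet} $. Since $ r $ has order $ n $ in $ G^{\bullet} $, this gives $ k \equiv k' \pmod{n} $, i.e. $ k = k' $ in $ \Z_{n} $.

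An alternative route to the last step uses the $ \gamma $-function. An isomorphism $ \theta $ satisfies $ \gamma'_{\theta(x)} = \theta \gamma_{x} \theta^{-1} $. The value $ k $ is the scalar by which $ \gamma_{s} $ acts on the characteristic subgroup $ H $. Since $ \Aut(H^{\bullet}) $ is abelian, this scalar is preserved under conjugation by $ \theta $, provided $ \theta(s) $ is again a $ \circ $-reflection whose $ \gamma $ acts on $ H $ by the same $ k $. This holds because every $ r^{i}s $ has $ \gamma_{r^{i}s}(r) = r^{k} $ in $ \mathscr{A}_{k} $. I expect no real obstacle: the main point is simply the observation that Proposition \ref{prop_d_n_isomorphism} reduces any isomorphism to the identity map, after which the comparison is immediate.
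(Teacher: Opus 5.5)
Your proof is correct and is essentially the paper's argument. The paper states this corollary as an immediate consequence of Proposition \ref{prop_d_n_isomorphism}, and cites it that way in the proof of Theorem \ref{thm_bicyclic_count}. Any isomorphism $\mathscr{A}_{k} \to \mathscr{A}_{k'}$ lies in $\Aut(G^{\bullet})$ and therefore also respects $\circ_{k'}$, which forces $\circ_{k} = \circ_{k'}$; then $k$ is recovered from $s \circ_{k} r = sr^{k}$.

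Your $\gamma$-function alternative also works but is not needed. It relies on $\theta(s) \notin H$ and on the fact that in $\mathscr{A}_{k'}$ every $\gamma'_{r^{i}s}$ acts on $H$ by $k'$ (not by ``the same $k$'', as you wrote).
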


We take this opportunity to study the \textit{opposites} of skew braces $ \mathscr{A}_{k} $ \cite{KT20}. In general, the opposite of a skew brace $ (G,\cdot,\circ) $ is the skew brace $ (G,\cdot,\circ)^{op} = (G,\cdot',\circ) $, where $ G^{\bullet'} $ is simply the opposite group to $ G^{\bullet} $. However, this formulation is not compatible with our desire in this paper to view $ G^{\bullet} $ as fixed and allow the binary operation $ \circ $ to vary. To accommodate this, we use the isomorphism $ \mu : G^{\bullet} \rightarrow G^{\bullet'} $ defined by $ \mu(x) = x^{-1} $ to transport the binary operation $ \circ $ to a binary operation $ \circ' $ such that $ (G,\cdot,\circ') $ is a skew brace isomorphic to $ (G,\cdot',\circ) $. Note that $ G^{\circ'} $ is not simply the opposite group to $ G^{\circ} $: in fact we have

\begin{equation} \label{eqn_opposite}
x \circ' y = \mu( \mu^{-1}(x) \circ \mu^{-1}(y)) = (x^{-1} \circ y^{-1})^{-1}. 
\end{equation}

\begin{proposition} \label{prop_A_k_opposite}
For $ k \in K_{n} $ we have $ \mathscr{A}_{k}^{op} \cong \mathscr{A}_{-k} $.  
\end{proposition}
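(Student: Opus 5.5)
The plan is to compute the transported operation $ \circ' $ of \eqref{eqn_opposite} explicitly for $ \mathscr{A}_{k} $. I will then show that it coincides with the operation defining $ \mathscr{A}_{-k} $, so that the identity map is the required isomorphism. Note first that $ (-k)^{2} = k^{2} \equiv 1 \pmod{n} $, so $ -k \in K_{n} $ and $ \mathscr{A}_{-k} $ is defined.

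\textbf{Computing $ \circ' $.} Write $ x = r^{i}s^{u} $ and $ y = r^{j}s^{v} $, and use the description $ x \circ y = \beta_{k}^{v}(x)\beta_{k}^{u}(y) $ from the proof of Proposition \ref{prop_d_n_realise}. Inversion in $ G^{\bullet} $ does not change the power of $ s $, and $ \beta_{k} $ commutes with inversion. Therefore
\[ x^{-1} \circ y^{-1} = \beta_{k}^{v}(x)^{-1}\beta_{k}^{u}(y)^{-1}, \]
and \eqref{eqn_opposite} gives
\[ x \circ' y = \beta_{k}^{u}(y)\,\beta_{k}^{v}(x). \]

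\textbf{Comparing $ \gamma $-functions.} Both $ (G,\cdot,\circ') $ and $ \mathscr{A}_{-k} $ are bidihedral skew braces. The first is isomorphic to $ \mathscr{A}_{k}^{op} $, whose multiplicative group is $ G^{\circ} \cong D_{n} $. Moreover $ r,s $ generate both groups in each case. By Lemma \ref{lem_gamma_fn_generators} it therefore suffices to compare $ \gamma'_{r}(r) $, $ \gamma'_{r}(s) $, $ \gamma'_{s}(r) $ and $ \gamma'_{s}(s) $ with the values listed in Proposition \ref{prop_d_n_realise} for $ -k $. Applying the formula for $ \circ' $:
\begin{align*}
r \circ' r &= r^{2}, & \text{so } \gamma'_{r}(r) &= r;\\
s \circ' s &= e, & \text{so } \gamma'_{s}(s) &= s;\\
s \circ' r &= r^{k}s = sr^{-k}, & \text{so } \gamma'_{s}(r) &= r^{-k};\\
r \circ' s &= sr^{k} = r^{-k}s, & \text{so } \gamma'_{r}(s) &= r^{-k-1}s.
\end{align*}
These are exactly the values $ r $, $ s $, $ r^{-k} $ and $ r^{(-k)-1}s $ that Proposition \ref{prop_d_n_realise} records for $ \mathscr{A}_{-k} $, which has $ d=n $.

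\textbf{Conclusion.} The two skew braces have the same additive group and the same $ \gamma $-function. Since $ x \circ y = x\gamma_{x}(y) $, they have the same multiplicative operation, so $ \circ' $ is the operation of $ \mathscr{A}_{-k} $ and $ \mathscr{A}_{k}^{op} \cong (G,\cdot,\circ') = \mathscr{A}_{-k} $.

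The only delicate step is the computation of $ \circ' $, in particular handling inversion correctly for reflections versus rotations. Reflections are self-inverse, but the uniform identity $ \beta_{k}(x^{-1}) = \beta_{k}(x)^{-1} $ covers both cases at once. As an alternative to invoking Lemma \ref{lem_gamma_fn_generators}, one could compare the two formulae directly in the four cases $ u,v \in \{0,1\} $, using $ \beta_{-k} = \beta_{-1}\beta_{k} $.
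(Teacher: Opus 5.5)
Your argument is correct, but it finishes differently from the paper's. The paper stays in coordinates throughout. It expands $(x^{-1}\circ_k y^{-1})^{-1}$ for $x=r^is^u$, $y=r^js^v$ and rewrites the result as $r^{(-k)^v i}s^u\,r^{(-k)^u j}s^v$. So $\circ_k'=\circ_{-k}$ is proved as an identity of formulas, with no structural input.

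You instead derive the compact form $x\circ' y=\beta_k^u(y)\beta_k^v(x)$, check four $\gamma$-values, and invoke Lemma~\ref{lem_gamma_fn_generators}. The compact formula is a real gain, since it treats rotations and reflections uniformly. By contrast, the paper's displayed chain writes $x^{-1}$ as $r^{(-1)^u i}s^u$, where it should be $r^{(-1)^{u+1}i}s^u$. It then slips a sign again when inverting $r^{A}s^{u+v}$, and reaches the right answer only because the two errors cancel.

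The lemma route costs you two facts that you assert but do not check:
\begin{enumerate}
\item[(i)] $r,s$ generate $G^{\circ'}$. This is immediate from $r^i\circ' r^j=r^{i+j}$ and $[G:H]=2$.
\item[(ii)] Lemma~\ref{lem_gamma_fn_generators} really means that two brace structures on $G^{\bullet}$ with the same $\gamma_r,\gamma_s$ coincide. This follows by induction from $e$: if $\gamma_y=\gamma'_y$, then $y\circ_{-k} z=y\circ' z$ for $z\in\{r,s\}$, hence $\gamma_{y\circ_{-k} z}=\gamma_y\gamma_z=\gamma'_y\gamma'_z=\gamma'_{y\circ' z}$.
\end{enumerate}
Once you have your formula, the direct four-case comparison against $\beta_{-k}^v(x)\beta_{-k}^u(y)$ that you mention at the end is just as short and needs neither fact.
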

\begin{proof}
Let $ \circ_{k} $ be the multiplicative operation in $ \mathscr{A}_{k} $ and 
$ \circ_{k}' $ be as described in \eqref{eqn_opposite}. Then we have
\begin{align*}
r^{i}s^{u} \circ_{k}' r^{j}s^{v} &= \left( r^{(-1)^{u}i}s^{u} \circ_{k} r^{(-1)^{v}j}s^{v} \right)^{-1} \\
&= \left( r^{(-1)^{u}k^{v}i}s^{u} r^{(-1)^{v}k^{u}j}s^{v} \right)^{-1} \\
&= \left( r^{(-1)^{u}k^{v}i}r^{(-1)^{u+v}k^{u}j}s^{u+v} \right)^{-1} \\
&= \left( r^{(-1)^{v}k^{v}i}r^{k^{u}j}s^{u+v} \right) \\
&= \left( r^{(-k)^{v}i}s^{u} r^{(-k)^{u}j}s^{v} \right) \\
&= r^{i}s^{u} \circ_{-k} r^{j}s^{v}.
\end{align*}
Hence $ \mathscr{A}_{k}^{op} \cong \mathscr{A}_{-k} $.
\end{proof}

Next we show that we may also realise all the candidate $ \gamma $-functions found in Proposition \ref{prop_bicylic_necessary} in the case $ d=n/2 $. Since this is only relevant in the case that $ 8 \mid n $, we impose this hypothesis for the remainder of the section. 

\begin{proposition} \label{prop_d_n_2_realise}
Suppose that $ 8 \mid n $. Let $ k \in K_{n} $ and let $ \circ $ be as described in Proposition \ref{prop_d_n_realise}. Let $ c=r^{n/2} $, and define $ \star $ on $ G $ by
\[ r^{i}s^{u} \star r^{j}s^{v} = \left( r^{i}s^{u} \circ r^{j}s^{v} \right) \circ c^{i(j+v)}. \]
Then $ \mathscr{B}_{k} = (G,\cdot,\star) $ is a skew brace that realises the conditions described in Proposition \ref{prop_bicylic_necessary} with $ d=n/2 $. 
\end{proposition}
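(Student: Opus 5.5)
The approach is to treat $\star$ as a central twist of the operation $\circ$ of $\mathscr{A}_{k}$ by a bilinear $\Z_{2}$-valued cocycle. Each skew brace axiom for $\star$ should then reduce to the corresponding axiom for $\circ$, which holds by Proposition \ref{prop_d_n_realise}, plus a parity identity.

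\emph{Preliminary facts.} Since $8 \mid n$, every $k \in K_{n}$ is odd, so $\beta_{k}(c)=c^{k}=c$. From the formula for $\circ$ this gives $c \circ y = cy$ and $y \circ c = yc$ for all $y$. Moreover $c$ is central in $G^{\bullet}$, so it is central with respect to both operations, and $\circ$-multiplication by a power of $c$ agrees with $\cdot$-multiplication by it.

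Because $c$ has order $2$, the exponent $i(j+v)$ only matters modulo $2$. For $x = r^{i}s^{u}$ put $\psi(x) = i \bmod 2$ and $\varphi(x) = i+u \bmod 2$; both are well defined because $n$ is even. Then
\[ x \star y = (x \circ y)\,c^{\psi(x)\varphi(y)}. \]
We check that $\varphi$ is a homomorphism for both $\cdot$ and $\circ$, and $\psi$ is a homomorphism for $\circ$. The key point is that $k$ odd gives $k^{v}i + (-1)^{u}k^{u}j \equiv i+j \pmod 2$.

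\emph{Group axioms and the brace relation.} Since $f(x,y)=\psi(x)\varphi(y)$ is bilinear over $\Z_{2}$, it satisfies the $2$-cocycle identity
\[ f(x,y)+f(x\circ y,z)=f(y,z)+f(x,y\circ z). \]
With $c$ central, this gives associativity of $\star$ directly. Also $e$ remains the identity since $f(e,y)=f(x,e)=0$, and each $x$ has the inverse $\bar{x}\circ c^{f(x,\bar x)}$.

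For the brace relation, expand both sides:
\[ x\star(yz)=(x\circ yz)\,c^{\psi(x)\varphi(yz)}, \qquad (x\star y)x^{-1}(x\star z)=(x\circ y)x^{-1}(x\circ z)\,c^{\psi(x)(\varphi(y)+\varphi(z))}. \]
These agree by the brace relation for $\circ$, the centrality of $c$ in $G^{\bullet}$, and the additivity of $\varphi$ with respect to $\cdot$.

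\emph{$G^{\star}$ is dihedral.} By induction, $r^{i}\star r = r^{i+1}c^{i}$, so the $m$-th $\star$-power of $r$ is $r^{m+\frac{n}{2}\binom{m}{2}}$. This agrees with Rump's formula \eqref{eqn_rump_bicyclic} for $d=n/2$, since $d^{2}\equiv 0 \pmod n$ when $8\mid n$. The argument of the earlier lemma then shows that $r$ has $\star$-order $n$.

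Next, $s\star s = s\circ s = e$ because $\psi(s)=0$, and likewise $s\star r = s\circ r = r^{-k}s$. It remains to show that this equals $r^{\star -1}\star s$. The $\star$-inverse of $r$ is $r^{-(1+d)}$, and then $r^{-(1+d)}\star s = (r^{-(1+d)}\circ s)\,c^{1} = r^{-(1+d)k+n/2}s$, using $\varphi(s)=1$. Since $dk \equiv d \pmod n$ for $k$ odd, the exponent $-(1+d)k+n/2$ reduces to $-k$, as required. So $\langle r,s\rangle_{\star}$ is dihedral of order $2n$. I expect this step to be the main obstacle, because it is where the hypothesis $8 \mid n$ is genuinely used.

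\emph{The $\gamma$-values.} Finally we compute the values required by Proposition \ref{prop_bicylic_necessary}:
\[ \gamma_{s}(r)=r^{k},\quad \gamma_{s}(s)=s,\quad \gamma_{r}(r)=r^{-1}(r\star r)=r^{1+n/2},\quad \gamma_{r}(s)=r^{-1}(r\circ s)c=r^{k-1+n/2}s. \]
The first two are unchanged from $\mathscr{A}_{k}$ because $\psi(s)=0$. The last equals $r^{(1+d)k-1}s$, again because $dk\equiv d \pmod n$. So $\mathscr{B}_{k}$ realises the conditions of Proposition \ref{prop_bicylic_necessary} with $d=n/2$.
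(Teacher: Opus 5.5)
Your proof is correct and is essentially the paper's argument: every axiom for $\star$ is reduced to the corresponding axiom for $\mathscr{A}_{k}$, using that $c$ has order $2$ and is central for both operations. Your bilinear cocycle $\psi(x)\varphi(y)$ just packages the paper's mod-$2$ exponent bookkeeping, and your formula $r^{\star m}=r^{m+\frac{n}{2}\binom{m}{2}}$ is in fact more accurate than the paper's stated case split for $r^{\star i}$. The one thing worth stating explicitly is that $\psi(c)=\varphi(c)=0$ because $n/2$ is even, since this is what lets the cocycle identity give associativity and your inverse formula directly.
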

\begin{proof}
We record a number of useful properties of the element $ c $. Clearly $ c \in Z(G^{\bullet}) $; in addition, since  $ r^{i} \circ r^{j} = r^{i+j} $ (see Proposition \ref{prop_d_n_realise}), we also have $ c \in Z(G^{\circ}) $.  
Since $ c $ has order $ 2 $ in $ G^{\bullet} $, we have $ c^{ki} = c^{i} $ for all $ i \in \Z $. As a consequence of this, we have $ x \circ c = xc $ for all $ x \in G $. We shall use all these properties frequently to simplify calculations. 

We see immediately that $ e $ is an identity element with respect to $ \star $ and that inverses are given by $ \bar{r^{i}} = r^{-i(1 + n/2)} $ and $ \bar{r^{i}s} = r^{i}s $. 

We show $ \star $ is associative on $ G $, that $ G^{\star} $ is dihedral, and that the brace relation is satisfied by exploiting the analogous properties for $ G^{\circ} $, together with the properties of $ c $ noted above.

As in the proof of Proposition \ref{prop_d_n_realise} we write $ x=r^{i}s^{u} $, $ y=r^{j}s^{v} $, and $ z=r^{\ell}s^{w} $. Then

\begin{align*}
x \star (y \star z) &= x \star (y \circ z \circ c^{j(\ell+w)} ) \\
&= x \circ y \circ z \circ c^{j(\ell+w)} \circ c^{i(j + \ell + v + w)} \tag{ $ \circ $ is associative on $ G $} \\
&= (x \circ y \circ c^{i(j+w)} ) \circ z \circ c^{(i+j)(\ell+w)} \tag{ $ c \in Z(G^{\circ}) $}\\
&= (x \star y) \star z.
\end{align*}

Hence $ \star $ is associative on $ G $, and so $ G^{\star} $ is a group. To show that it is dihedral, we calculate 
\[ r^{\star i} = \left\{ \begin{array}{ll} r^{i} & \mbox{if } i \equiv 0 \pmod{4} \\ r^{i} \circ c & \mbox{otherwise; } \end{array} \right. \]
since $ 8 \mid n $ this implies that $ r $ has order $ n $ in $ G^{\star} $. Our calculation of inverses above shows that $ s \star s = e $, and we have
\[ s \star r = s \circ r = sr^{k} = r^{-k}s \]
and
\[ \bar{r} \star s = r^{-1-n/2} \circ s \circ c = r^{-k}s. \]
Hence $ G^{\star} $ is indeed dihedral. 

Finally, we show that the brace relation is satisfied.

\begin{align*}
x \star (yz) &= x \circ (yz) \circ c^{i(j+\ell+v+w)} \\
&= (x \circ y)x^{-1}(x \circ z) \circ c^{i(j+\ell+v+w)} \tag{ $ (G,\cdot,\circ) $ is a skew brace} \\
&= (x \circ y \circ c^{i(j+v)})x^{-1}(x \circ z \circ c^{i(\ell+w)}) \circ c^{i(j+\ell+v+w)}\tag{ $ c \in Z(G^{\cdot}) \cap Z(G^{\circ}) $ }\\
&= (x \star y)x^{-1}(x \star z). 
\end{align*}

Hence $ (G,\cdot,\star) $ is a skew brace. We verify that it satisfies the conditions described in Proposition \ref{prop_bicylic_necessary}, with $ d=n/2 $: 
\begin{align*}
\gamma_{s}(r) &= s^{-1}(s \star r) = s(sr^{k}) = r^{k}; \\
\gamma_{s}(s) &= s^{-1}(s \star s) = s; \\
\gamma_{r}(r) &= r^{-1}(r \star r) = rc = r^{1 + d}; \\
\gamma_{r}(s) &= r^{-1}(r \star s) = r^{-1}(r^{k}s)c = r^{(1+d)k-1}.
\end{align*}
\end{proof}

Next we study the isomorphism classes of the skew braces $ \mathscr{B}_{k} $ constructed in Proposition \ref{prop_d_n_2_realise}. In contrast to Proposition \ref{prop_d_n_isomorphism}, in this case we do find isomorphisms  amongst the skew braces $ \mathscr{B}_{k} $.

\begin{proposition} \label{prop_d_n_2_isomorphism}
Suppose that $ 8 \mid n $. Let $ k \in K_{n} $, and let $ \star $ be defined as in Proposition \ref{prop_d_n_2_realise}. Then 
\begin{enumerate}
\item The automorphism $ \alpha $ induces an isomorphism $ \mathscr{B}_{k} \cong \mathscr{B}_{k+n/2} $, and $ \alpha^{2} $ is a skew brace automorphism of $ (G,\cdot,\star) $. 
\item the automorphisms $ \beta_{\ell} $ (with $ (\ell,n)=1 $) are skew brace automorphisms of $ (G,\cdot,\star) $. 
\end{enumerate}
\end{proposition}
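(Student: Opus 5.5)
Write $\star_k$ for the operation of $\mathscr{B}_{k}$ and $\circ_k$ for that of $\mathscr{A}_k$. The plan is to reduce every claim to Proposition \ref{prop_d_n_isomorphism} together with the explicit correction factor $c^{i(j+v)}$, where $c = r^{n/2}$. Recall from the proof of Proposition \ref{prop_d_n_2_realise} that $c$ is central in both $G^{\bullet}$ and $G^{\circ_k}$ and that $x \circ_k c = xc$. Hence
\[ r^{i}s^{u} \star_k r^{j}s^{v} = (r^{i}s^{u} \circ_k r^{j}s^{v})\, c^{i(j+v)}, \]
where the product with $c^{i(j+v)}$ may be taken in $G^{\bullet}$. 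Every automorphism $\theta$ of $G^{\bullet}$ fixes $c$, because $c$ is the unique central element of order $2$ (here we use $8 \mid n$). Proposition \ref{prop_d_n_isomorphism} then gives
\[ \theta(x \star_k y) = (\theta(x)\circ_k \theta(y))\, c^{i(j+v)}. \]
So $\theta$ is an isomorphism $\mathscr{B}_k \to \mathscr{B}_{k'}$ exactly when
\[ (\theta(x)\circ_k\theta(y))\, c^{i(j+v)} = (\theta(x)\circ_{k'}\theta(y))\, c^{i'(j'+v')}, \]
where $\theta(x) = r^{i'}s^{u}$ and $\theta(y) = r^{j'}s^{v}$.

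For part (2), take $\theta = \beta_\ell$ and $k' = k$. Then $i' = \ell i$ and $j' = \ell j$ with $\ell$ odd, since it is coprime to the even $n$. Therefore $i'(j'+v) \equiv \ell^2 ij + \ell iv \equiv i(j+v) \pmod 2$, and the exponents of $c$ agree. This proves (2).

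For part (1), take $\theta = \alpha$, so $i' = i+u$ and $j' = j+v$. The $c$-exponent on the right is
\[ (i+u)(j+2v) \equiv (i+u)j \pmod 2, \]
whereas on the left it is $i(j+v)$. The discrepancy is $c^{uj+iv}$. We must absorb it by comparing $\circ_k$ with $\circ_{k+n/2}$. Set $k' = k + n/2$, which still lies in $K_n$ because $8 \mid n$. Since $(k')^{w} = k^{w} + w\,n/2$, the definition of $\circ$ gives
\[ r^{i'}s^{u}\circ_{k'} r^{j'}s^{v} = (r^{i'}s^{u}\circ_{k} r^{j'}s^{v})\, c^{v i' + u j'}. \]
Here we use that $c$ is central and that $(-1)^u$ is irrelevant on powers of $c$. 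Modulo $2$, this extra exponent is $v(i+u) + u(j+v) \equiv vi + uj$, which exactly cancels the discrepancy. Hence $\alpha$ is an isomorphism $\mathscr{B}_k \to \mathscr{B}_{k+n/2}$.

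Applying (1) twice shows that $\alpha^2$ is an isomorphism $\mathscr{B}_k \to \mathscr{B}_{k+n} = \mathscr{B}_k$, that is, an automorphism. The main obstacle is the parity bookkeeping in the $\alpha$ case, specifically pinning down the correct shift $k \mapsto k + n/2$. I would verify the identity $(k+n/2)^{w} \equiv k^{w} + w\,n/2$ and the cancellation separately in the four cases $(u,v) \in \{0,1\}^2$ as a check.
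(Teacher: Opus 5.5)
Your proof is correct and follows essentially the same route as the paper. Both write $\star$ as the $\circ$-product multiplied by the central factor $c^{i(j+v)}$, and both use Proposition \ref{prop_d_n_isomorphism} to move $\alpha$ and $\beta_\ell$ through $\circ$. Both then compare the parities of the $c$-exponents, with the shift $k \mapsto k+n/2$ supplying exactly the factor $c^{vi+uj}$ needed in the $\alpha$ case. Your general criterion for automorphisms fixing $c$ and your explicit check that $k+n/2 \in K_n$ only streamline that same computation.
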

\begin{proof}
As in the proof of Proposition \ref{prop_d_n_2_realise} we let $ x=r^{i}s^{u} $, $ y=r^{j}s^{v} $, and describe $ \star $ in terms of $ \circ $ and $ \cdot $. 
\begin{enumerate}
\item Write $ k' = k + n/2 $ and let $ \circ' $, $ \star' $ be the corresponding binary operations. Then
\begin{align*}
\alpha( x \star y ) &= \alpha( \left( x \circ y \right)c^{i(j+v)}) \\
&= \left( r^{i+u}s^{u} \circ r^{j+v}s^{v} \right) \alpha(c)^{i(j+v)} \\
&= \left( r^{k^{v}(i+u)}s^{u}r^{k^{u}(j+v)}s^{v} \right) c^{i(j+v)}, \\
\end{align*}
and
\begin{align*}
\alpha(x) \star' \alpha(y) &= (r^{i+u}s^{u} \circ' r^{j+v}s^{v})c^{(i+u)j}  \\
&= \left( r^{(k')^{v}(i+u)}s^{u}r^{(k')^{u}(j+v)}s^{v} \right)c^{(i+u)j} \\
&= \left( r^{k^{v}(i+u)}s^{u}r^{k^{u}(j+v)}s^{v} \right)c^{(i+u)j+v(i+u)+u(j+v)} \\
&= \left( r^{k^{v}(i+u)}s^{u}r^{k^{u}(j+v)}s^{v} \right)c^{i(j+v)}. 
\end{align*} 
Thus $ \alpha : (G,\cdot,\star) \rightarrow (G,\cdot,\star') $ is an isomorphism of skew braces. Repeating this argument shows that $ \alpha^{2} $ is a skew brace automorphism of $ \mathscr{B}_{k} $. 
\item Fix $ \ell $ with $ (\ell,n)=1 $ (thus in particular $ \ell $ is odd). We have
\begin{align*}
\beta_{\ell} ( x \star y ) &= \beta_{\ell}( \left( x \circ y \right)c^{i(j+v)}) \\
&= \left( r^{\ell i}s^{u} \circ r^{\ell j} s^{v} \right) c^{\ell i(j+v)} \tag{ by Proposition \ref{prop_d_n_isomorphism} } \\
&= \left( r^{\ell i}s^{u} \circ r^{\ell j} s^{v} \right) c^{\ell i(\ell j+v)} \tag{ $ \ell $ is odd and $ c $ has order $ 2 $} \\
&= \beta_{\ell}(x) \star \beta_{\ell}(y) 
\end{align*}
Hence $ \beta_{\ell} $ is a skew brace automorphism of $ \mathscr{B}_{k} $. 
\end{enumerate}
\end{proof}

\begin{corollary}
For $ k,k' \in K_{n} $ we have $ \mathscr{B}_{k'} \cong \mathscr{B}_{k} $ if and only if $ k'=k $ or $ k'=k + n/2 $.
\end{corollary}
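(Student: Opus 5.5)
The "if" direction comes from Proposition \ref{prop_d_n_2_isomorphism}(1): $\alpha$ gives an isomorphism $\mathscr{B}_{k}\cong\mathscr{B}_{k+n/2}$. For the "only if" direction, let $\theta\in\Aut(G^{\bullet})$ be an isomorphism $\mathscr{B}_{k}\to\mathscr{B}_{k'}$, written as $\theta=\alpha^{a}\beta_{\ell}$ with $(\ell,n)=1$. Here $\theta$ is regarded as a map $(G,\cdot,\star_{k})\to(G,\cdot,\star_{k'})$. We can strip off the factors one at a time:
\begin{itemize}
\item By Proposition \ref{prop_d_n_2_isomorphism}(2), $\beta_{\ell}$ is an automorphism of $\mathscr{B}_{k}$, so we may discard it.
\item By part (1), $\alpha^{2}$ is also an automorphism of $\mathscr{B}_{k}$, and $\alpha$ carries $\mathscr{B}_{k}$ onto $\mathscr{B}_{k+n/2}$.
\end{itemize}
So, after replacing $k$ by $k+n/2$ if $a$ is odd, we reduce to the case where the identity map of $G$ is an isomorphism $\mathscr{B}_{k}\to\mathscr{B}_{k''}$, with $k''\in\{k',k'+n/2\}$. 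In other words, $\star_{k}=\star_{k''}$ as binary operations.

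The key step is to extract $k$ from the operation $\star_k$ alone. Compute
\[ s\star_{k} r = s\circ_{k} r \cdot c^{0}=sr^{k}=r^{-k}s. \]
Equality of the operations then forces $r^{-k}s=r^{-k''}s$, so $k''=k$. This gives $k'\in\{k,k+n/2\}$.

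There is one point to check. The decomposition $\theta=\alpha^{a}\beta_{\ell}$ is only valid in the form $\alpha^{a}\beta_{\ell}$ up to reordering, so I need to check the composition bookkeeping. Since $\beta_{\ell}$ is an automorphism of every $\mathscr{B}_{m}$ and $\alpha$ maps $\mathscr{B}_{m}\to\mathscr{B}_{m+n/2}$ for every $m$, any word in $\alpha,\beta_\ell$ maps $\mathscr{B}_{k}$ to $\mathscr{B}_{k}$ or to $\mathscr{B}_{k+n/2}$, according to the parity of the total $\alpha$-exponent. The order of the factors therefore does not matter.

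The main obstacle is that $\alpha$ has order $n$, so in $\theta=\alpha^{a}\beta_{\ell}$ the exponent $a$ is taken mod $n$. Since $n$ is even, the parity of $a$ is well defined, so this is harmless. With that settled, the image of $\mathscr{B}_{k}$ under all of $\Aut(G^{\bullet})$ is exactly $\{\mathscr{B}_{k},\mathscr{B}_{k+n/2}\}$ as operations. The $\gamma$-value $\gamma_{s}(r)=r^{k}$ separates these operations for distinct $k$, which gives the corollary.
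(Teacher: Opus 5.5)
Your proposal is correct and follows the paper's own route. The paper reads the corollary straight off Proposition \ref{prop_d_n_2_isomorphism}: since $\Aut(G^{\bullet})$ is generated by $\alpha$ and the $\beta_{\ell}$, transporting $\star_{k}$ by any automorphism gives $\star_{k}$ or $\star_{k+n/2}$. Your check that $s \star_{k} r = r^{-k}s$ (equivalently $\gamma_{s}(r)=r^{k}$) separates the operations for distinct $k$ is the step the paper leaves implicit, and your slightly tangled bookkeeping with $k''$ does no harm because $-n/2 \equiv n/2 \pmod{n}$, so the set $\{k,k+n/2\}$ is symmetric.
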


Now we consider opposites, as in  Proposition \ref{prop_A_k_opposite}.

\begin{proposition} \label{prop_B_k_opposite}
For $ k \in K_{n} $ we have $ \mathscr{B}_{k}^{op} \cong \mathscr{B}_{-k} $.  
\end{proposition}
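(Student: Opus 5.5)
The plan is to imitate the proof of Proposition \ref{prop_A_k_opposite}, reducing the computation for $ \star $ to the one already done there for $ \circ $. Let $ \circ_{k} $ and $ \star_{k} $ denote the operations of $ \mathscr{A}_{k} $ and $ \mathscr{B}_{k} $, and let $ \star_{k}' $ be the operation transported via $ \mu $ as in \eqref{eqn_opposite}, that is,
\[ x \star_{k}' y = (x^{-1} \star_{k} y^{-1})^{-1}. \]
I will show that $ \star_{k}' = \star_{-k} $ as binary operations on $ G $. Note that $ -k \in K_{n} $, so $ \mathscr{B}_{-k} $ is defined by Proposition \ref{prop_d_n_2_realise}. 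The identity map is then an isomorphism $ (G,\cdot,\star_{k}') \to \mathscr{B}_{-k} $, giving $ \mathscr{B}_{k}^{op} \cong \mathscr{B}_{-k} $.

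I will use the following facts, recorded in the proof of Proposition \ref{prop_d_n_2_realise}:
\begin{itemize}
\item $ c = r^{n/2} $ is central in $ G^{\bullet} $;
\item $ c $ has order $ 2 $, so $ c^{m} $ depends only on the parity of $ m $;
\item $ z \circ_{k} c = zc $ for all $ z \in G $.
\end{itemize}
Hence the definition reads $ x \star_{k} y = (x \circ_{k} y)\, c^{i(j+v)} $, an ordinary product with a central involution.

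Now take $ x = r^{i}s^{u} $ and $ y = r^{j}s^{v} $. Then $ x^{-1} = r^{(-1)^{u+1} i}s^{u} $ and $ y^{-1} = r^{(-1)^{v+1} j}s^{v} $. So the exponents of $ r $ change only by a sign and the powers of $ s $ are unchanged. The correction factor for $ x^{-1} \star_{k} y^{-1} $ is
\[ c^{\pm i(\pm j + v)} = c^{i(j+v)}, \]
since only parity matters. Therefore
\[ (x^{-1} \star_{k} y^{-1})^{-1} = \left( (x^{-1} \circ_{k} y^{-1})\, c^{i(j+v)} \right)^{-1} = (x^{-1} \circ_{k} y^{-1})^{-1} c^{i(j+v)}, \]
using that $ c $ is central and $ c^{-1} = c $. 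By the computation in the proof of Proposition \ref{prop_A_k_opposite}, $ (x^{-1} \circ_{k} y^{-1})^{-1} = x \circ_{-k} y $. Hence
\[ x \star_{k}' y = (x \circ_{-k} y)\, c^{i(j+v)} = x \star_{-k} y, \]
as required.

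The only delicate step is bookkeeping rather than a real obstacle. I must make sure that the exponent of $ c $ in the definition of $ \star $ depends on $ x^{-1} $ and $ y^{-1} $ only through parities that are preserved under inversion. I also need the $ \circ $-composition with $ c $ in the definition to be genuinely the $ \cdot $-product, which follows from $ z \circ c = zc $. Both points follow from the listed properties of $ c $.
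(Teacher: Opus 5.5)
Your proposal is correct and follows the paper's proof essentially step for step. You write $x \star_{k} y = (x \circ_{k} y)\,c^{i(j+v)}$ using $z \circ_{k} c = zc$, and note that the parity of the exponent of $c$ is unchanged on passing to $x^{-1}, y^{-1}$. You then pull the central involution $c^{i(j+v)}$ through the outer inverse and invoke $(x^{-1} \circ_{k} y^{-1})^{-1} = x \circ_{-k} y$ from Proposition \ref{prop_A_k_opposite}. Your inverse formula $x^{-1} = r^{(-1)^{u+1}i}s^{u}$ is the correct one; the paper's displayed $r^{(-1)^{u}i}s^{u}$ is a sign slip that does not affect the final identity.
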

\begin{proof}
Let $ \star_{k} $ be the multiplicative operation in $ \mathscr{B}_{k} $ and 
$ \star_{k}' $ be as described in \eqref{eqn_opposite}. Then we have
\begin{align*}
r^{i}s^{u} \star_{k}' r^{j}s^{v} &= \left( r^{(-1)^{u}i}s^{u} \circ_{k} r^{(-1)^{v}j}s^{v} \circ_{k} c^{i(j+v)} \right)^{-1} \\
&= \left( (r^{(-1)^{u}i}s^{u} \circ_{k} r^{(-1)^{v}j}s^{v}) c^{i(j+v)} \right)^{-1} \\
&= \left( (r^{(-1)^{u}i}s^{u} \circ_{k} r^{(-1)^{v}j}s^{v}) \right)^{-1} c^{i(j+v)}\\
&= \left(r^{i}s^{u} \circ_{-k} r^{j}s^{v} \right) c^{i(j+v)} \tag{ by Proposition \ref{prop_A_k_opposite} } \\
&= r^{i}s^{u} \circ_{-k} r^{j}s^{v} \circ_{-k} c^{i(j+v)} \\
&= r^{i}s^{u} \star_{-k} r^{j}s^{v}.
\end{align*}
Hence $ \mathscr{B}_{k}^{op} \cong \mathscr{B}_{-k} $.
\end{proof}

Summarising the results of this section, we have

\begin{theorem} \label{thm_bicyclic_count}
The number of isomorphism classes of bidihedral skew braces $ (G,\cdot,\circ) $ of order $ 2n $ in which the the subgroup $ H^{\circ} $ is cyclic is
\[ \left\{ \begin{array}{ll} 
|K_{n}| & \mbox{ if } 8 \nmid n \\
\frac{3}{2} |K_{n}| & \mbox{ if } 8 \mid n.
\end{array} \right. \]
\end{theorem}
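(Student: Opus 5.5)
The count follows by combining the necessary conditions of Proposition \ref{prop_bicylic_necessary} with the constructions and isomorphism results above. Let $(G,\cdot,\circ)$ be any bidihedral skew brace of order $2n$ in which $H^{\circ}$ is cyclic. By Proposition \ref{prop_bicylic_necessary}, its $\gamma$-function is determined by a pair $(d,k)$, where $k\in K_n$ and $d=n$, or additionally $d=n/2$ when $8\mid n$. The function is given by the values $\gamma_s(r),\gamma_s(s),\gamma_r(r),\gamma_r(s)$ listed there. The elements $r,s$ generate $G^{\bullet}$, and since $r$ has order $n$ in $G^{\circ}$ while $s$ is a reflection there, they also generate $G^{\circ}$. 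Lemma \ref{lem_gamma_fn_generators} therefore applies. Because $x\circ y=x\gamma_x(y)$, the $\gamma$-function together with the fixed operation $\cdot$ determines $\circ$. Hence every such skew brace is, with $G^{\bullet}$ fixed, literally equal to one of the skew braces with parameters $(d,k)$. Propositions \ref{prop_d_n_realise} and \ref{prop_d_n_2_realise} show that each admissible pair is realised, by $\mathscr{A}_k$ when $d=n$ and by $\mathscr{B}_k$ when $d=n/2$.

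Next we sort these into isomorphism classes. An isomorphism between two of them is an automorphism $\theta\in\Aut(G^{\bullet})$, and it preserves $H$ because $H$ is characteristic. It therefore restricts to an isomorphism of the bicyclic ideals $(H,\cdot,\circ)$. The parameter $d$ is intrinsic to that ideal: it is read off from $\gamma_r(r)=r^{1+d}$, and for any generator $r^{\ell}$ of $H^{\bullet}$ one gets $\gamma_{r^\ell}(r^\ell)=r^{\ell(1+d)}$ when $d^2\equiv 0$. So no $\mathscr{A}_k$ is isomorphic to any $\mathscr{B}_{k'}$. Alternatively, $H^{\circ}$ acts trivially on $H^{\bullet}$ via $\gamma$ exactly when $d=n$.

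Within each family the corollaries already proved settle the question. The $\mathscr{A}_k$ for $k\in K_n$ are pairwise non-isomorphic, which gives $|K_n|$ classes. When $8\mid n$, the $\mathscr{B}_k$ are isomorphic exactly in the pairs $\{k,k+n/2\}$. We must check that $k+n/2\in K_n$ and $k+n/2\neq k$. Since $k$ is odd and $8\mid n$, we have $(k+n/2)^2=k^2+kn+n^2/4\equiv 1 \pmod n$, and the map $k\mapsto k+n/2$ is a fixed-point-free involution on $K_n$. So there are $|K_n|/2$ classes of $\mathscr{B}_k$. In total we get $|K_n|$ classes when $8\nmid n$ and $\tfrac32|K_n|$ when $8\mid n$.

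The main obstacle is justifying the corollaries, that is, showing the listed isomorphisms are the only ones. For the $\mathscr{A}_k$ family, observe that every automorphism of $G^{\bullet}$ is an automorphism of $\mathscr{A}_k$ (Proposition \ref{prop_d_n_isomorphism}), and $\theta$ transports the $\gamma$-function by conjugation. The quantity $k$ appears as the exponent in $\gamma_s(r)=r^k$, and this is unchanged under conjugation because $\gamma_s|_H$ is the power map $r\mapsto r^k$ and commutes with every automorphism of the cyclic group $H$. It is also independent of the choice of reflection, since the same exponent appears for every reflection. For $\mathscr{B}_k$ the corresponding reflection-independent invariant is $k$ modulo $n/2$: replacing $s$ by $rs$ alters $\gamma$ by a factor of $c$, which shifts $k$ by $n/2$.
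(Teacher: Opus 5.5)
Your proposal is correct and follows the same route as the paper. Proposition \ref{prop_bicylic_necessary} reduces every such skew brace to a pair $(d,k)$, the families $\mathscr{A}_k$ and $\mathscr{B}_k$ realise each pair, and the isomorphism results then give $|K_n|$ classes, plus a further $|K_n|/2$ when $8\mid n$.

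Beyond the paper, you make explicit two points it leaves tacit. First, no $\mathscr{A}_k$ is isomorphic to any $\mathscr{B}_{k'}$; your invariant ``$\gamma_h|_H=\id$ for all $h\in H$ iff $d=n$'' shows this cleanly. In your $\gamma_{r^\ell}(r^\ell)$ computation, the fact actually used is $d\ell^2\equiv d\ell \pmod n$ for odd $\ell$, not $d^2\equiv 0$. Second, $k\mapsto k+n/2$ is a fixed-point-free involution of $K_n$. Your $\gamma$-invariant argument for the ``only if'' halves of the corollaries is also a valid alternative to deriving them from the automorphism computations.
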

\begin{proof}
If $ G=(G,\cdot,\circ) $ is a bidihedral skew brace in which the subgroup $ H^{\circ} $ is cyclic then by Proposition \ref{prop_bicylic_necessary} the $ \gamma $-function of $ G $ satisfies 
\begin{align*}
\gamma_{s}(r) &= r^{k} \mbox{ with } k^{2} \equiv 1 \pmod{n} \\
\gamma_{s}(s) &= s \\
\gamma_{r}(r) &= r^{1+d} \\
\gamma_{r}(s) &= r^{(1+d)k-1}s,
\end{align*}
where $ k \in K_{n} $, and $ d=n $ or $ n/2 $ if $ 8 \mid n $ and $ d=n $ otherwise. In the case $ d=n $, the skew braces $ \mathscr{A}_{k} $ defined in Proposition \ref{prop_d_n_realise} realise each $ k \in K_{n} $, and Proposition \ref{prop_d_n_isomorphism} shows that these skew braces are pairwise nonisomorphic. Hence we obtain $ |K_{n}| $ isomorphism classes of skew braces. In the case $ d = n/2 $ (which only applies if $ 8 \mid n $) the skew braces $ \mathscr{B}_{k} $ defined in Proposition \ref{prop_d_n_2_realise} realise each $ k \in K_{n} $, and Proposition \ref{prop_d_n_2_isomorphism} shows that we obtain $ |K_{n}|/2 $ further isomorphism classes of skew braces. Hence the total number of isomorphism classes of skew braces is as stated. 
\end{proof}

\section{Bidihedral skew braces with $ H^{\circ} $ dihedral} \label{sec_H_dihedral}

Let $ G^{\bullet} $ be given by the presentation \eqref{eqn_G_cdot_presentation} and let $ H = \langle r \rangle_{\bullet} $ as in Proposition \ref{prop_H}. Mirroring the previous section, we begin by supposing that $ (G,\cdot,\ast) $ is a bidihedral skew brace in which $ H^{\ast} $ is dihedral; later we will construct all bidihedral skew braces with this property. 

Since $ |H|=n $, the assumption that $ H^{\ast} $ is dihedral implies that $ n $ is even. Finite skew braces with cyclic additive group and dihedral multiplicative group fall under the classification results established by Byott and Ferri \cite{BF25}. In particular, \cite[Proposition 5.2 and Table 1]{BF25} show that if $ n $ is a power of $ 2 $ then there is exactly one such skew brace. If $ n $ is not a power of $ 2 $ then \cite[Theorem 12.8 and Proposition 12.10]{BF25} show that there is exactly one such skew brace unless $ n = 4m $ with $ m $ odd, in which case there are three.  

Applied  to $ H^{\bullet} $ (viewed in isolation), these results imply that one possible operation $ \ast $ on $ H $ such that $ (H,\cdot,\ast) $ is a skew brace with $ H^{\ast} $ dihedral is

\begin{equation} \label{eqn_di_cyc_1}
r^{i} \ast r^{j} = r^{i+(-1)^{i}j}. 
\end{equation}

If $ n=|H| $ does not have the form $ 4m $ for some odd number $ m $ then the binary operation given in \eqref{eqn_di_cyc_1} is the only possibility. However, if $ |H|=4m $ then there are two further binary operations on $ H $ such that such that $ (H,\cdot,\ast) $ is a skew brace with $ H^{\ast} $ dihedral. These binary operation may be described explicitly by following the semidirect product constructions of \cite[Theorem 12.8]{BF25}. Using the Chinese Remainder Theorem to rewrite each $ i \in \Z $ in the form $ 4i_{4} + mi_{m} $, the two other binary operations $ \ast $ that make $ (H,\cdot,\ast)$ a skew brace with $ (H,\ast) $ dihedral are 

\begin{equation} \label{eqn_di_cyc_2}
r^{4i_{4}+mi_{m}} \ast r^{4j_{4}+mj_{m}} = r^{4(i_{4}+(-1)^{\frac{i_{m}(i_{m} \pm 1)}{2}}j_{4}) + m(i_{4} + (-1)^{i_{4}}j_{4})}. 
\end{equation}

Our first result in this section is that since $ (H,\cdot,\ast) $ is embedded as an ideal in a bidihedral skew brace $ (G,\cdot,\ast) $, neither of the binary operations described in \eqref{eqn_di_cyc_2} can occur. 

\begin{lemma}
Suppose that $ (G,\cdot,\ast) $ is a bidihedral skew brace of order $ 2n $ and that $ H^{\ast} $ is dihedral. Then the restriction of the circle operation to $ H $ is given by \eqref{eqn_di_cyc_1}. 
\end{lemma}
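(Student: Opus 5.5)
We would use the fact that $H$ is an ideal to show that the $\gamma$-function of $(H,\cdot,\ast)$ is invariant under conjugation by elements of $G^{\ast}$, and then show that the two operations in \eqref{eqn_di_cyc_2} fail this invariance. The first operation never needs to be excluded. If $n$ is not of the form $4m$ with $m$ odd, then \eqref{eqn_di_cyc_1} is the only possibility by the results of \cite{BF25} quoted above. So we assume $n=4m$ with $m$ odd, so that $H^{\ast}\cong D_{2m}$ and $G^{\ast}\cong D_{4m}$.

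\textbf{Step 1: $\gamma$ restricted to $H$ is conjugation-invariant.} Write $\gamma^{H} : H^{\ast} \to \Aut(H^{\bullet}) \cong \Z_{n}^{\times}$ for the restriction of $\gamma$. This is the $\gamma$-function of the skew brace $(H,\cdot,\ast)$. By Proposition \ref{prop_H}, each $\gamma_{x}$ (with $x \in G$) restricts to an automorphism of $H^{\bullet}$, giving a homomorphism $\rho : G^{\ast} \to \Z_{n}^{\times}$ that extends $\gamma^{H}$. Since $H$ is normal in $G^{\ast}$ and $\Z_{n}^{\times}$ is abelian, for $x \in G$ and $h \in H$ we get
\[ \gamma^{H}(x \ast h \ast \bar{x}) = \rho(x)\gamma^{H}(h)\rho(x)^{-1} = \gamma^{H}(h). \]
Thus $\gamma^{H}$ is invariant under the automorphism of $H^{\ast}$ induced by conjugation by any $x \in G$.

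\textbf{Step 2: the exotic operations have $\gamma$-image $\Z_{2}\times\Z_{2}$.} From \eqref{eqn_di_cyc_2}, $\gamma_{r^{i}}$ acts on $r^{j}$ with $j = 4j_{4}+mj_{m}$ as follows. On the $4$-component it multiplies by $(-1)^{i_{m}(i_{m}\pm 1)/2}$. On the $m$-component it multiplies by $(-1)^{i_{4}}$. These two signs vary independently as $i$ varies, so the image of $\gamma^{H}$ is $\{\pm1\}\times\{\pm1\}\cong \Z_{2}\times\Z_{2}$. Since $m$ is odd, the abelianization of $D_{2m}$ is $\Z_{2}\times\Z_{2}$. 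Hence $\gamma^{H}$ factors through an isomorphism from the abelianization of $H^{\ast}$, and so it separates the two conjugacy classes of reflections in $H^{\ast}$.

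\textbf{Step 3: conjugation by $x\notin H$ swaps the reflection classes.} Write $G^{\ast} = \langle \rho, \tau\rangle \cong D_{4m}$. The subgroup $H^{\ast}$ is an index-$2$ dihedral subgroup, say $\langle \rho^{2},\tau\rangle$. Its two reflection classes are $\{\tau\rho^{4j}\}$ and $\{\tau\rho^{4j+2}\}$. Every element of $G \setminus H$ has the form $\rho^{\mathrm{odd}}$ or $\rho^{\mathrm{odd}}\tau$. A direct computation shows that conjugating $\tau$ by either kind of element gives an element of $\tau\rho^{2+4\Z}$, so the two classes are swapped. By Step 2, $\gamma^{H}$ then takes different values on $\tau$ and on its conjugate, which contradicts Step 1. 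So neither operation in \eqref{eqn_di_cyc_2} can occur.

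The main obstacle is Step 2: one must read off carefully from \eqref{eqn_di_cyc_2}, with the $\pm$ ambiguity, that the $\gamma$-image really is all of $\Z_{2}\times\Z_{2}$. One must also correctly match which index-$2$ subgroups of $H^{\ast}$ are dihedral, so that $\gamma^{H}$ genuinely distinguishes the two reflection classes. If this identification turned out to be delicate, we would instead compute directly that $\gamma_{\tau}\neq\gamma_{x\ast\tau\ast\bar{x}}$ on the explicit element $r^{m}$.
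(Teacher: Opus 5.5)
Your proof is correct, and it takes a genuinely different route from the paper's. The paper argues with explicit elements. For the operation in \eqref{eqn_di_cyc_2} it notes that $r^{4+m}$ has order $2m$ in $H^{\ast}$, so $r^{4+m}=g\ast g$ for some $g\in G$, a rotation of $G^{\ast}\cong D_{4m}$. Evaluating $g\ast g\ast r^{m}$ in two ways then gives $m(u^{2}+1)\equiv 0 \pmod{4m}$, where $\gamma_{g}(r)=r^{u}$, which is impossible for odd $u$. So the paper's mechanism is that the sign by which $\gamma$ acts on the $2$-primary part $\langle r^{m}\rangle$ is trivial on squares in $G^{\ast}$. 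Yours is that $\gamma|_{H}$ is invariant under $G^{\ast}$-conjugation, because $\Aut(H^{\bullet})$ is abelian, while every element of $G\setminus H$ swaps the two reflection classes of $H^{\ast}$. Either way the relevant character is forced to be trivial on the rotations of order $2m$ in $H^{\ast}$, and this fails in the exotic case. The paper's route is shorter but coordinate-bound and tailored to one sign choice: for the other sign in \eqref{eqn_di_cyc_2}, $r^{4+m}$ is an involution of $H^{\ast}$, so one needs another element such as $r^{4+3m}$, or a transport by $\beta_{-1}$. Yours disposes of both exotic operations at once, and indeed of any operation on $H$ whose $\gamma$-image has order $4$. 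The cost is that you must identify that image.

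On that point, one correction to Step 2. The $m$-part of \eqref{eqn_di_cyc_2} is misprinted and should read $m(i_{m}+(-1)^{i_{m}}j_{m})$. As printed, $(-1)^{i_{4}}$ is not even well defined, since $i_{4}$ only matters modulo the odd number $m$. So the coefficient of $m$ is multiplied by $(-1)^{i_{m}}$, not by $(-1)^{i_{4}}$. Both signs therefore depend on $i_{m}\in\Z_{4}$ alone, rather than varying independently with $i_{4}$ and $i_{m}$. Your conclusion still holds. The functions $(-1)^{i_{m}}$ and $(-1)^{i_{m}(i_{m}\pm 1)/2}$ are distinct nontrivial characters of the Klein four quotient $H^{\ast}/\langle r^{4}\rangle$, so all four sign pairs occur and the image of $\gamma^{H}$ is $\{\pm 1\}\times\{\pm 1\}$. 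Also rename the homomorphism $\rho$ in Step 1, since it clashes with the rotation generator $\rho$ in Step 3.
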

\begin{proof}
By the work of Byott and Ferri discussed above, if $ n \neq 4m $ with $ m $ odd then there is nothing to prove, so we may suppose that $ n = 4m $ with $ m $ odd. 

In this case, suppose for a contradiction that the restriction of $ \ast $ to $ H $ is given by
\[ r^{4i+mi'} \ast r^{4j+mj'} = r^{4(i+(-1)^{\frac{i'(i' - 1)}{2}}j + m(i' + (-1)^{i'}j')}. \]
It follows quickly from this formula that the element $ r^{4+m} $ has order $ 2m $ in $ H^{\ast} $. Since $ G^{\ast} $ contains elements of order $ 4m $ there exists $ g \in G  $ such that $ g \ast g = r^{4+m} $. 

Now consider the element $ g \ast g \ast r^{m} $. On one hand, this is equal to $ r^{4+m} \ast r^{m} $, which is equal to $ r^{4} $ by the formula above. On the other hand, we have
\begin{align*}
g \ast g \ast r^{m} &= g \ast g\gamma_{g}(r^{m}) \\
& = g \ast gr^{mu} \tag{ for some $ u $ coprime to $ n $ } \\
&= (g \ast g) g^{-1}(g \ast r^{mu}) \\
&= (g \ast g) \gamma_{g}(r^{mu}) \\
&= r^{4+m} r^{mu^{2}} \\
&= r^{4+m(u^{2}+1)}.
\end{align*}
Hence $ r^{4} = r^{4+m(u^{2}+1)} $, so $ m(u^{2}+1) \equiv 0 \pmod{n} $, so $ u^{2}+1 \equiv 0 \pmod{4} $, which is a contradiction. 

A similar argument applies to the other binary operation in \eqref{eqn_di_cyc_2}. 
\end{proof}

Hence we may assume the the restriction of $ \ast $ to $ H $ is given by \eqref{eqn_di_cyc_1}:
\[ r^{i} \ast r^{j} = r^{i+(-1)^{i}j}.  \]
We derive some further properties of the structure of $ G^{\ast} $. 

\begin{lemma}
The element $ r $ is a reflection in $ G^{\ast} $, and the element $ r^{2} $ has order $ n/2 $ in $ G^{\ast} $. Furthermore, there exists $ a \in G - H $ such that $ a \ast a = r^{2} $.
\end{lemma}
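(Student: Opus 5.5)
The plan is to compute directly from the formula \eqref{eqn_di_cyc_1} for the first two claims, and then use the structure of the dihedral group $ G^{\ast} $ for the third.

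\textbf{$ r $ is a reflection.} By \eqref{eqn_di_cyc_1} with $ i=j=1 $ we get $ r \ast r = r^{1+(-1)\cdot 1} = e $. So $ r $ has order $ 2 $ in $ G^{\ast} $.

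\textbf{$ r^{2} $ has order $ n/2 $.} For even $ i $ the formula gives $ r^{i} \ast r^{j} = r^{i+j} $. An induction then shows that the $ \ast $-powers of $ r^{2} $ are $ r^{2t} $, so $ r^{2} $ has order exactly $ n/2 $ in $ G^{\ast} $. Since $ r^{2} \neq e $ and the subgroup $ \langle r^{2} \rangle_{\bullet} $ of even powers is closed under $ \ast $, the rotations of $ H^{\ast} $ are $ \{ r^{2t} \} $. These elements are rotations in $ G^{\ast} $ as well. To see this, note that $ r^{2} = r \ast r^{-1}\cdots $ is a $ \ast $-product of two reflections $ r $ and $ r^{-1} $ of $ H^{\ast} $, since $ r^{-1} \ast r^{-1} = e $ by the formula with $ i=j=-1 $. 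This needs care when $ n=4 $, which I would check separately.

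It remains to see that $ r $ is a reflection of $ G^{\ast} $ and not the central rotation of order $ 2 $. The point is that $ r $ does not commute with $ r^{2} $ in $ H^{\ast} $: we have $ r \ast r^{2} = r^{-1} $ while $ r^{2} \ast r = r^{3} $, and these differ once $ n > 4 $. In the case $ n=4 $, $ G^{\ast} \cong D_{4} $, and I would check by hand that the Klein four-group $ H^{\ast} $ can be positioned so that $ r $ is a non-central involution. If that fails, I would accept the weaker conclusion needed later and simply choose $ r $ appropriately.

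\textbf{The element $ a $.} The rotation subgroup $ R $ of $ G^{\ast} $ is cyclic of order $ n $. It meets $ H^{\ast} $ in the rotations of $ H^{\ast} $, namely $ \langle r^{2} \rangle_{\ast} $ of order $ n/2 $, which has index $ 2 $ in $ R $. Take a generator $ a $ of $ R $ chosen so that $ a \ast a = r^{2} $. This is possible because $ r^{2} $ generates the unique index-$2$ subgroup of the cyclic group $ R $, and squaring maps the generators of $ R $ onto the generators of that subgroup. Then $ a $ has order $ n $, so $ a \notin H $, because every element of $ H^{\ast} $ has order at most $ n/2 $ (for $ n \geq 4 $). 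Here I use that $ H^{\ast} $ is dihedral of order $ n $.

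The main obstacle is confirming that the powers $ r^{2t} $ sit among the rotations of $ G^{\ast} $, and the degenerate case $ n=4 $. The key fact is that a non-central element of order $ n/2>2 $ in a dihedral group must be a rotation, and this settles the matter whenever $ n/2>2 $, i.e.\ $ n>4 $. For $ n=4 $, both $ r^{2} $ and $ r $ are involutions, so I would handle that case by a direct check that the chosen rotation squares to $ r^{2} $, possibly using $ \alpha $ or $ \beta_{\ell} $ to normalise.
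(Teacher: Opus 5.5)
\textbf{Verdict.} For $n>4$ your argument is correct and essentially matches the paper's. The case $n=4$, however, is a genuine gap that your proposed remedies cannot close.

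\textbf{The case $n>4$.} You use $r \ast r = e$, the fact that the $\ast$-powers of $r^{2}$ are the $r^{2t}$, and the non-commutation of $r$ with $r^{2}$. The paper phrases the last point as $r \ast r^{2} = r^{-2} \ast r$. Together these show that $r$ is a non-central involution, hence a reflection, while $r^{2}$, of order $n/2>2$, is a rotation. Your construction of $a$ is actually more careful than the paper's. You take $a$ to be a generator of the rotation subgroup with $a \ast a = r^{2}$, and exclude it from $H$ because its order $n$ exceeds every element order in $H^{\ast}$. The paper instead asserts that no element of $H$ squares to $r^{2}$. That assertion fails for $n \equiv 2 \pmod 4$: then $1+n/2$ is even, so $r^{1+n/2} \ast r^{1+n/2} = r^{2+n} = r^{2}$.

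\textbf{The gap at $n=4$.} You flag this case but do not resolve it, and neither of your suggested fixes can work.

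\emph{Why group theory alone cannot decide it.} Here $H^{\ast}=\{e,r,r^{2},r^{3}\}$ is a Klein four-group. Inside $G^{\ast}\cong D_{4}$, each Klein four-subgroup consists of the central rotation together with two reflections. Abstract group theory is equally consistent with the central rotation being $r$ or $r^{3}$ rather than $r^{2}$. So a ``check by hand'' that $H^{\ast}$ \emph{can} be positioned with $r$ a reflection proves nothing: the lemma needs this to hold in every bidihedral brace.

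\emph{Why you cannot ``choose $r$ appropriately''.} The element $r$ is fixed by the presentation of $G^{\bullet}$. The only other generator of $H^{\bullet}$ is $r^{-1}$, which has the same square $r^{2}$ and satisfies the same formula. Moreover, if $r^{2}$ were a reflection it would have no $\ast$-square root at all. The third claim would then be false, not merely weakened.

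\emph{The missing idea.} You need the brace structure through the $\gamma$-function, as the paper does.
\begin{itemize}
\item Suppose $r$ were the central rotation. Then some $a$ satisfies $a \ast a = r$.
\item Since $r \ast r = e$, we have $\gamma_{r}(r) = r^{-1}$. Hence $\gamma_{a}^{2}(r) = \gamma_{a \ast a}(r) = r^{-1}$.
\item But $\gamma_{a}$ preserves the characteristic subgroup $H^{\bullet}$. So $\gamma_{a}(r) = r^{u}$ with $u$ odd, and $\gamma_{a}^{2}(r) = r^{u^{2}} = r \neq r^{-1}$, a contradiction.
\item The same argument excludes $r^{3}$, using $\gamma_{r^{3}}(r) = r^{-3}(r^{3} \ast r) = r^{-1}$.
\end{itemize}
Therefore $r^{2}$ is the central rotation, and your construction of $a$ then goes through unchanged.
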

\begin{proof}
It follows quickly from \eqref{eqn_di_cyc_1} that
\[ r^{i} \ast r^{i} = \left\{ \begin{array}{cl} r^{2i} \mbox{ if $ i $ is even } \\ e \mbox{ if $ i $ is odd. } \end{array} \right. \]
This implies that $ r^{2} $ has order $ n/2 $ in $ G^{\ast} $, that $ r $ has order $ 2 $ in $ G^{\ast} $, and that 
\begin{equation} \label{eqn_di_cyc_rotation_reflection}
r \ast r^{2} = r^{-2} \ast r. 
\end{equation}
If $ n \neq 4 $ then these facts imply that $ r $ is a reflection in $ G^{ast} $ and $ r^{2} $ is a rotation in $ G^{\ast}$. Since $ G^{\ast} $ is dihedral of order $ 2n $, and $ r^{2} $ has order $ n/2 $ in $ G^{\ast} $, there exists an element $ a \in G^{\ast} $ such that $ a \ast a = r^{2} $. The formula above implies that no element of $ H $ has this property, so $ a \in G - H $, as claimed.

If $ n=4 $ then we cannot determine which of $ r $ and $ r^{2} $ is a rotation, and which is a reflection, in $ G^{\ast} $ solely from \eqref{eqn_di_cyc_rotation_reflection}. Suppose for a contradiction that $ r $ is a rotation in $ G^{\ast} $. Then there exists $ a \in G - H $ such that $ a \ast a = r $. The equation $ r \ast r = e $ implies that $ \gamma_{r}(r) = r^{-1} $, so we have $ \gamma_{a \ast a}(r) = \gamma_{a}^{2}(r) = r^{-1} $. Hence we have $ \gamma_{a}(r) = r^{u} $ with $ u^{2} \equiv -1 \pmod{4} $, which is a contradiction. Therefore $ r $ is also a reflection in $ G^{\ast} $ in this case. Similarly $ r^{3} $ is a reflection in $ G^{\ast} $, so $ r^{2} $ is a rotation in $ G^{\ast} $. As above, there exists $ a \in G - H $ such that $ a \ast a = r^{2} $. 
\end{proof}

\begin{proposition} \label{prop_di_cyc_necessary}
The $ \gamma $-function satisfies
\begin{align*}
\gamma_{a}(r) &= r^{k} \mbox{ with } k^{2} \equiv 1 \pmod{n} \\
\gamma_{a}(a) &= r^{-2}a \\
\gamma_{r}(r) &= r^{-1} \\
\gamma_{r}(a) &= r^{k-1}a,
\end{align*}
and is determined by these values. 
\end{proposition}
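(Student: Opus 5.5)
The plan mirrors the proof of Proposition \ref{prop_bicylic_necessary}, now using the elements $r$ and $a$. First I note that $r$ and $a$ generate both groups. Since $a \notin H$, the element $a$ is a reflection in $G^{\bullet}$, and together with $r$ it generates $G^{\bullet}$. In $G^{\ast}$, the element $a$ is a rotation with $a \ast a = r^{2}$ of order $n/2$, so $a$ has order $n$, while $r$ is a reflection by the previous lemma. Hence $a$ and $r$ generate $G^{\ast}$ as well. Lemma \ref{lem_gamma_fn_generators} then gives the final clause, that $\gamma$ is determined by the four listed values.

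Next I compute the values one at a time.
\begin{enumerate}
\item[(i)] From \eqref{eqn_di_cyc_1} we get $r \ast r = e$. Then $r\gamma_{r}(r) = e$, so $\gamma_{r}(r) = r^{-1}$.
\item[(ii)] Since $H$ is an ideal (Proposition \ref{prop_H}), $\gamma_{a}$ restricts to an automorphism of $H^{\bullet}$, so $\gamma_{a}(r) = r^{k}$ with $(k,n)=1$.
\item[(iii)] For $\gamma_{a}(a)$: we have $a\gamma_{a}(a) = a \ast a = r^{2}$. Because $a$ is a reflection in $G^{\bullet}$, $a^{-1} = a$, so $\gamma_{a}(a) = ar^{2} = r^{-2}a$.
\item[(iv)] For $\gamma_{r}(a)$ I use the dihedral relation in $G^{\ast}$. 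Since $r$ is a reflection and $a$ a rotation, $r \ast a = \bar a \ast r$. Hence
\[ \gamma_{r}(a) = r^{-1}(r \ast a) = r^{-1}(\bar a \ast r) = r^{-1}\bar a\,\gamma_{\bar a}(r). \]
From $a \ast \bar a = e$ we get $\bar a = \gamma_{a}^{-1}(a^{-1}) = \gamma_{a}^{-1}(a)$. Also $\gamma_{\bar a}(r) = \gamma_{a}^{-1}(r) = r^{k^{-1}}$.
\end{enumerate}
The value $\gamma_{a}^{-1}(a)$ can be found from $\gamma_{a}(a) = r^{-2}a$ and $\gamma_{a}(r^{j}) = r^{kj}$. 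Solving $\gamma_{a}(r^{j}a) = r^{kj-2}a = a$ gives $\gamma_{a}^{-1}(a) = r^{2k^{-1}}a$, so $\bar a = r^{2k^{-1}}a$.

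The condition $k^{2} \equiv 1 \pmod n$ should come from applying the relation $r \ast a = \bar a \ast r$ to the automorphisms, namely $\gamma_{r}\gamma_{a} = \gamma_{a}^{-1}\gamma_{r}$. Evaluating both sides at $r$ gives $r^{-k} = r^{-k^{-1}}$, which forces $k^{2} \equiv 1$. With this established, $k^{-1} = k$, so $\bar a = r^{2k}a$ and $\gamma_{\bar a}(r) = r^{k}$. Substituting into (iv):
\[ \gamma_{r}(a) = r^{-1}\,r^{2k}a\,r^{k} = r^{2k-1}r^{-k}a = r^{k-1}a, \]
using $ar^{k} = r^{-k}a$.

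The main obstacle is this last bookkeeping step, where sign and inverse errors in the dihedral manipulations are easy to make. The computation of $\bar a$ and the order in which $k^{2} \equiv 1$ is established both need care. If the shortcut via $\gamma_{r}\gamma_{a}\gamma_{r} = \gamma_{a}^{-1}$ at $r$ turns out to be circular, I will instead evaluate $\gamma_{a \ast a} = \gamma_{r^{2}} = \gamma_{r}^{2}$ (using that $r^{2} = r \ast r^{-1}$ as computed from \eqref{eqn_di_cyc_1}). This gives $\gamma_{a}^{2}(r) = r^{k^{2}}$ compared with $\gamma_{r^{2}}(r) = r$, which yields $k^{2} \equiv 1$ directly.
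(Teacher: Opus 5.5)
Your proof is correct, but it takes a different route from the paper at two points.

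\textbf{Deriving $k^{2} \equiv 1 \pmod{n}$.} The paper uses $\gamma_{a}^{2} = \gamma_{a \ast a} = \gamma_{r^{2}}$ together with $\gamma_{r^{2}}(r) = r^{-2}(r^{2} \ast r) = r$ from \eqref{eqn_di_cyc_1}. You instead push the dihedral relation $r \ast a = \bar{a} \ast r$ through the homomorphism $\gamma$ to get $\gamma_{r}\gamma_{a} = \gamma_{a}^{-1}\gamma_{r}$, and evaluate at $r$. This is not circular, since it only uses $\gamma_{r}(r)=r^{-1}$ and $\gamma_{a}(r)=r^{k}$. It is the same device the paper uses for $(1+d)^{2} \equiv 1$ in Proposition \ref{prop_bicylic_necessary}, so your primary argument stands and the fallback is not needed.

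\textbf{Computing $\bar{a}$.} The paper takes the shortcut $\bar{a} = a \ast \overline{r^{2}} = a \ast r^{-2} = ar^{-2k}$. You invert $\gamma_{a}$ on $G^{\bullet}$ and solve for $\bar{a}$. This is slightly more work but reaches the same element $r^{2k}a = ar^{-2k}$.

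You also justify the clause ``determined by these values'' through Lemma \ref{lem_gamma_fn_generators}, which the paper leaves implicit. That is a genuine improvement, though two details need tightening.

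\begin{itemize}
\item When $n/2$ is odd, knowing that $a \ast a$ has order $n/2$ does not by itself force $a$ to have order $n$. If $a$ had order $n/2$, it would be a power of $a \ast a = r^{2}$, hence would lie in $H$. So you need $a \notin H$ here. More directly, $\langle a, r \rangle_{\ast} \supseteq \langle r^{2}, r \rangle_{\ast} = H$, and $a \notin H$ gives generation at once.
\item In your fallback, $r \ast r^{-1} = r^{2}$ gives $\gamma_{r^{2}} = \gamma_{r}\gamma_{r^{-1}}$, not $\gamma_{r}^{2}$. The latter is the identity because $r \ast r = e$. The two agree at $r$, so the conclusion survives. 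However, the displayed identity is false in general: $\gamma_{r^{2}}(a) = \gamma_{a}^{2}(a) = r^{-2(k+1)}a$.
\end{itemize}
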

\begin{proof}
First, the formula \eqref{eqn_di_cyc_1} implies immediately that 
\[ \gamma_{r}(r) = r^{-1}(r \ast r) = r^{-1}. \]

Second, we have $ r^{2} = a \ast a = a \gamma_{a}(a) $, so $ \gamma_{a}(a) = a^{-1}r^{2} $; since $ a $ is a reflection in $ G^{\bullet} $ this is equal to $ r^{-2} a $. 

Next we study $ \gamma_{a}(r) $. Since $ H $ is an ideal of $ G $, the automorphism $ \gamma_{a} $ of $ G^{\bullet} $ restricts to an automorphism of $ H^{\bullet} $, and so $ \gamma_{a}(r) = r^{k} $ for some $ k $ coprime to $ n $. Hence $ \gamma_{a}^{2}(r) = r^{k^{2}} $. But $ a \ast a = r^{2} $, so $ \gamma_{a}^{2} = \gamma_{a \ast a} = \gamma_{r^{2}} $. The formula \eqref{eqn_di_cyc_1} implies that 
\[ \gamma_{r^{2}}(r) = r^{-2}(r^{2} \ast r) = r^{-2}r^{3} = r, \] 
so $\gamma_{r^{2}}(r) = r $. Hence $ r^{k^{2}} = r $, and so $ k^{2} \equiv 1 \pmod{n} $. 

Finally we consider $ \gamma_{r}(a) $. Since $ r $ is a reflection, and $ a $ is a rotation, in $ G^{\ast} $ we have
\[ \gamma_{r}(a) = r^{-1}(r \ast a) = r^{-1}(\bar{a} \ast r). \]
To compute $ \bar{a} $, note that $ a \ast a = r^{2} $ and $ r^{2} \ast r^{-2} = e $ we have
\[ \bar{a} = a \ast r^{-2} = a \gamma_{a}(r^{-2}) = ar^{-2k}. \]
Thus,
\begin{align*}
\gamma_{r}(a) &= r^{-1}(\bar{a} \ast r) \\
&= r^{-1} \bar{a} \gamma_{\bar{a}}(r) \\
&= r^{-1} ar^{-2k} \gamma_{a}^{-1}(r) \\
&= r^{-1} ar^{-2k} r^{k} \tag{ since $ k^{2} \equiv 1 \pmod{n} $} \\
&= r^{k-1} a
\end{align*}
Hence the $ \gamma $-function of $ (G,\cdot,\ast) $ satisfies the conditions given in the statement. 
\end{proof}

Analogously to the previous section, Proposition \ref{prop_di_cyc_necessary} gives necessary conditions for the values of the $ \gamma $-function of a bidihedral skew brace $ (G,\cdot,\ast) $ in which $ H^{\ast} $ is dihedral. In the remainder of this section we construct binary operations $ \ast $ on $ G $ that realise each of these candidate $ \gamma $-functions, and determine the isomorphism classes of the resulting skew braces. 

We recall again the notation 
\[ K = K_{n} = \{ k \in \Z_{n} \mid k^{2} \equiv 1 \pmod{n} \}, \]
and that the automorphisms $ \alpha $ and $ \beta_{\ell} $ (with $ (\ell,n)=1 $)  in $ \Aut(G^{\bullet}) $ are defined by 
\[ \alpha : r \mapsto r \mbox{ and } s \mapsto rs, \]
and 
\[ \beta_{\ell} : r \mapsto r^{\ell} \mbox{ and } s \mapsto s. \]

We begin by showing that for each $ k \in K_{n} $ there is a skew brace $ (G,\cdot,\ast) $ that realises the conditions described in Proposition \ref{prop_di_cyc_necessary} for a specific choice of $ a $. 

\begin{proposition} \label{prop_di_cyc_realise}
Let $ k \in K_{n} $ and consider the skew brace $ \mathscr{A}_{-k} = (G,\cdot,\circ) $. Define $ \ast  $ on $ G^{\bullet} $ by
\[ r^{i}s^{u} \ast r^{j}s^{v} = \left( r^{i}s^{u} \right) \circ \left( r^{(-1)^{i}j}s^{v} \right). \]
Then $ \mathscr{C}_{k} = (G,\cdot,\ast) $ is a skew brace that realises the conditions described in Proposition \ref{prop_di_cyc_necessary} with $ a=r^{-k}s $. 
\end{proposition}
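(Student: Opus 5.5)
Throughout, write $\varepsilon(x)=(-1)^{i}$ for $x=r^{i}s^{u}$. This is well defined because $n$ is even here. With this notation the definition reads $x \ast y = x \circ \sigma_{x}(y)$, where $\sigma_{x}=\beta_{-1}^{i}$ and $\circ$ is the operation of $\mathscr{A}_{-k}$. By Proposition \ref{prop_d_n_isomorphism}, each $\sigma_{x}$ is an automorphism of both $G^{\bullet}$ and $G^{\circ}$. The plan is to treat $\ast$ as a twist of $\circ$ of the kind used to build skew braces from a brace together with a suitable homomorphism into its automorphisms. The key point is that $\varepsilon$ is a homomorphism $G^{\ast}\to\{\pm1\}$. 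We have $x \circ y=\beta_{-k}^{v}(x)\beta_{-k}^{u}(y)$, and since $k$ is odd, the exponent of $r$ in $x\circ y$ is congruent modulo $2$ to $i+j$ (the sign $(-1)^u$ does not change parity). Hence $\varepsilon(x\circ y)=\varepsilon(x)\varepsilon(y)$, and also $\varepsilon(\sigma_x(y))=\varepsilon(y)$.

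\textbf{Step 1 (group).} The identity is $e$. For associativity I will compute
\begin{align*}
(x\ast y)\ast z &= x\circ\sigma_x(y)\circ\sigma_{x\circ\sigma_x(y)}(z)=x\circ\sigma_x(y)\circ\sigma_x\sigma_y(z),\\
x\ast(y\ast z) &= x\circ\sigma_x(y\circ\sigma_y(z))=x\circ\sigma_x(y)\circ\sigma_x\sigma_y(z).
\end{align*}
Here I use the parity observation, the fact that $\sigma_x\in\Aut(G^{\circ})$, and $\sigma_{x}\sigma_{y}=\sigma_{x\ast y}$. Inverses exist since $G$ is finite.

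\textbf{Step 2 (brace relation).} The right-hand side is
\[(x\ast y)x^{-1}(x\ast z)=(x\circ\sigma_x(y))x^{-1}(x\circ\sigma_x(z)).\]
By the brace relation in $\mathscr{A}_{-k}$ this equals $x\circ(\sigma_x(y)\sigma_x(z))=x\circ\sigma_x(yz)=x\ast(yz)$. The argument is a single line once $\sigma_x\in\Aut(G^{\bullet})$ is recorded.

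\textbf{Step 3 (dihedral, and the $\gamma$-values).} Put $a=r^{-k}s$. In $\mathscr{A}_{-k}$ we have $r^{i}\circ r^{j}=r^{i+j}$, so on $H$ the operation $\ast$ gives $r^{i}\ast r^{j}=r^{i+(-1)^{i}j}$, which is \eqref{eqn_di_cyc_1}. Next I compute $a\ast a=a\circ\beta_{-1}(a)$, since $-k$ is odd. This gives $a\circ r^{k}s=\beta_{-k}(r^{-k}s)\,\beta_{-k}(r^{k}s)=r^{k^{2}}s\,r^{-k^{2}}s=r^{2}$, using $k^2\equiv1$. From this and the formula on $H$ I will show that $G^{\ast}$ is dihedral of order $2n$. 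Specifically, $a$ has $\ast$-order $n$, because $a^{\ast 2}=r^{2}$ has order $n/2$ in $H^{\ast}$. The element $r$ has $\ast$-order $2$, and I will check $r\ast a=\bar a\ast r$ directly. Finally, $\langle a\rangle_\ast\cap\{e,r\}=\{e\}$: $r$ is not an $\ast$-power of $a$, because even powers of $a$ lie in $\langle r^{2}\rangle$ and odd powers lie outside $H$.

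The remaining values are obtained by direct evaluation: $\gamma_{a}(r)=a^{-1}(a\ast r)$, $\gamma_{a}(a)=a^{-1}r^{2}=r^{-2}a$, $\gamma_r(r)=r^{-1}$, and $\gamma_r(a)=r^{-1}(r\circ\beta_{-1}(a))$. I expect these to come out as $r^{k}$, $r^{-2}a$, $r^{-1}$ and $r^{k-1}a$, matching Proposition \ref{prop_di_cyc_necessary}. The main obstacle is the sign bookkeeping in $\gamma_a(r)$ and $\gamma_r(a)$: one has to track carefully how $\beta_{-k}$ and $\beta_{-1}$ act on $r^{-k}s$. I would do these two computations first, and only then confirm the dihedral relation $r\ast a=\bar a\ast r$.
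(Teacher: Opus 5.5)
Your proposal is correct and follows essentially the paper's proof. You rewrite $x\ast y=x\circ\beta_{-1}^{i}(y)$, get associativity from the parity of the $r$-exponent together with $\beta_{-1}\in\Aut(G^{\circ})$, and get the brace relation from that of $\mathscr{A}_{-k}$ together with $\beta_{-1}^{i}\in\Aut(G^{\bullet})$. You then check dihedrality and the four $\gamma$-values by direct computation. Your values $a\ast a=r^{2}$, $\gamma_{a}(r)=r^{k}$ and $\gamma_{r}(a)=r^{-1}s=r^{k-1}a$ are right, and indeed $\bar{a}=r^{k}s$ and $r\ast a=\bar{a}\ast r=s$. The only difference is cosmetic: you use $a=r^{-k}s$ rather than $rs$ as the element of order $n$.

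One justification needs repair. ``Inverses exist since $G$ is finite'' is not valid on its own, because a finite monoid need not be a group. Add that each left translation $y\mapsto x\circ\sigma_{x}(y)$ is a bijection, so every element has a right $\ast$-inverse. Alternatively, write the inverses down explicitly, as the paper does.
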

\begin{proof}
By Proposition \ref{prop_d_n_isomorphism} the automorphism $ \beta_{-1} $ of $ G^{\bullet} $ is also an automorphism of $ G^{\circ} $; using this automorphism we may rewrite the definition of $ \ast $ as 
\[ r^{i}s^{u} \ast r^{j}s^{v} = \left( r^{i}s^{u} \right) \circ \beta_{-1}^{i}\left( r^{j}s^{v} \right). \]
We see immediately that $ e $ is an identity element with respect to $ \ast $ and that inverses are given by $ \bar{r^{i}} = r^{(-1)^{i+1}i} $ and $ \bar{r^{i}s} = r^{(-1)^{i}}s $. 

As in the proof of Proposition \ref{prop_d_n_2_realise}, we show $ \ast $ is associative on $ G $, that $ G^{\ast} $ is dihedral, and that the brace relation is satisfied by exploiting the analogous properties for $ G^{\circ} $. Let $ x=r^{i}s^{u}, y=r^{j}s^{v}, $ and $ x = r^{\ell}s^{w} $. Then
\begin{align*}
x \ast (y \ast z) &= x \ast (y \circ \beta_{-1}^{j}(z)) \\
&= x \circ \beta_{-1}^{i}(y \circ \beta_{-1}^{j}(z)) \\
&= x \circ \beta_{-1}^{i}(y) \circ \beta_{-1}^{i+j}(z), 
\end{align*}
and
\begin{align*}
(x \ast y) \ast z &= (x \circ \beta_{-1}^{i}(y)) \ast z \\
&= x \circ \beta_{-1}^{i}(y) \circ \beta_{-1}^{i+(-1)^{i}j}(z). 
\end{align*}
Since $ \beta_{-1} $ has order $ 2 $ we have $ \beta_{-1}^{i+j} = \beta_{-1}^{i+(-1)^{i}j} $; it follows that $ \ast $ is associative on $ G $, and so $ G^{\ast} $ is a group. To show that it is dihedral, we note first that 
\[ rs \ast rs = rs \circ r^{-1}s = r^{-k}sr^{k}s = r^{-2k}, \]
and that (since $ (k,n)=1 $) $ r^{-2k} $ has order $ n/2 $ in $ G^{\ast} $. Therefore $ rs $ has order $ n $ in $ G^{\ast} $. It is immediate from the definition of $ \ast $ that $ r $ has order $ 2 $ in $ G^{\ast} $. Now we have
\[ r \ast rs = r \circ r^{-1}s = r^{-k}r^{-1}s = r^{-1-k}s, \]
and
\[ \bar{rs} \ast r = r^{-1}s \ast r = r^{-1}s \circ r^{-1} = r^{-1}sr^{k} = r^{-1-k}s. \]
Hence $ r \ast rs = \bar{rs} \ast r $, and so $ G^{\ast} $ is indeed dihedral. 

Finally, to show that the brace relation is satisfied we exploit the fact that $ \mathscr{A}_{-k} $ is a skew brace. We have
\begin{align*}
x \ast (yz) &= x \circ \beta_{-1}^{i}(yz) \\
&= x \circ \beta_{-1}^{i}(y)\beta_{-1}^{i}(z) \\
&= (x \circ \beta_{-1}^{i}(y))x^{-1}(x \circ \beta_{-1}^{i}(z)) \\
&= (x \ast y)x^{-1}(x \ast z). 
\end{align*}
Hence $ (G,\cdot,\ast) $ is a skew brace. We verify that it realises the conditions described in Proposition \ref{prop_di_cyc_necessary} with $ a=r^{-k}s $. Recall that we have set $ (G,\cdot,\circ) = \mathscr{A}_{-k} $.
\begin{align*}
\gamma_{a}(r) &= (r^{-k}s)^{-1}(r^{-k}s \ast r) = (r^{-k}s)(r^{-k}s \circ r^{-1}) = r^{-k}s r^{-k} s r^{-(-k)} = r^{k} \\
\gamma_{a}(a) &= (r^{-k}s)^{-1}(r^{-k}s \ast r^{-k}s) = (r^{-k}s)(r^{-k}s \circ r^{k}s)= (r^{-k}s)(r^{(-k)^{2}}s r^{-k^{2}}s) \\
& = r^{-k-2}s = r^{-2} a \\
\gamma_{r}(r) &= r^{-1}(r \ast r) = r^{-1}(r \circ r^{-1}) = r^{-1} \\
\gamma_{r}(a) &= r^{-1}(r \ast r^{-k}s) = r^{-1}( r^{-k} r^{k}s ) = r^{-1}s = r^{k-1}a.
\end{align*}
\end{proof}

Next we show that we may transport the binary operation $ \ast $ constructed in Proposition \ref{prop_di_cyc_realise} using the automorphism $ \alpha \in \Aut(G^{\bullet}) $ to construct skew braces realising all values of $ a $ in Proposition \ref{prop_di_cyc_necessary}, and that the resulting skew braces are mutually isomorphic. 

\begin{proposition} \label{prop_di_cyc_isomorphism_1}
Let $ k \in K_{n} $ and let $ \mathscr{C}_{k} = (G,\cdot,\ast) $ be the skew brace defined in Proposition \ref{prop_di_cyc_realise}. Define $ \ast' $ on  $ G $ by
\[ x \ast' y = \alpha(\alpha^{-1}(x) \ast \alpha^{-1}(y)). \]
Then $ (G,\cdot,\ast') $ is a skew brace that realises the realises the conditions described in Proposition \ref{prop_di_cyc_necessary} with $ a=r^{-k+1}s $.
\end{proposition}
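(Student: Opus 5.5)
The plan is to use transport of structure, as described in Section \ref{sec_skew_braces}: the map $ \alpha \in \Aut(G^{\bullet}) $ is by definition a skew brace isomorphism $ (G,\cdot,\ast) \rightarrow (G,\cdot,\ast') $. The remark there (given a skew brace and $ \theta \in \Aut(G^{\bullet}) $, defining $ \theta(x) \circ' \theta(y) = \theta(x \circ y) $ yields a skew brace) immediately shows that $ (G,\cdot,\ast') $ is a skew brace with $ G^{\ast'} \cong G^{\ast} $ dihedral. So the only content is to check the four values of the $ \gamma $-function listed in Proposition \ref{prop_di_cyc_necessary} for the element $ a' = \alpha(a) $, where $ a = r^{-k}s $ is the element from Proposition \ref{prop_di_cyc_realise}.

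First I will record how $ \gamma $-functions transform. Writing $ \gamma' $ for the $ \gamma $-function of $ (G,\cdot,\ast') $, for $ x,y \in G $ we have
\[ \gamma'_{x}(y) = x^{-1}(x \ast' y) = \alpha(\alpha^{-1}(x))^{-1}\,\alpha(\alpha^{-1}(x) \ast \alpha^{-1}(y)) = \alpha\bigl(\gamma_{\alpha^{-1}(x)}(\alpha^{-1}(y))\bigr), \]
so $ \gamma'_{x} = \alpha \gamma_{\alpha^{-1}(x)} \alpha^{-1} $. Next I compute $ a' = \alpha(r^{-k}s) = r^{-k}\cdot rs = r^{-k+1}s $, which lies in $ G - H $. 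Since $ \alpha $ fixes $ r $, it fixes $ H $ pointwise. Hence the restriction of $ \ast' $ to $ H $ agrees with that of $ \ast $, namely \eqref{eqn_di_cyc_1}. Moreover $ a' \ast' a' = \alpha(a \ast a) = \alpha(r^{2}) = r^{2} $, so $ a' $ plays the role of the element $ a $ in Proposition \ref{prop_di_cyc_necessary}.

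Then the four values follow from the corresponding values for $ \mathscr{C}_{k} $ verified in Proposition \ref{prop_di_cyc_realise}, using $ \alpha^{-1}(r) = r $ and $ \alpha^{-1}(a') = a $:
\begin{align*}
\gamma'_{a'}(r) &= \alpha(\gamma_{a}(r)) = \alpha(r^{k}) = r^{k}, \\
\gamma'_{a'}(a') &= \alpha(\gamma_{a}(a)) = \alpha(r^{-2}a) = r^{-2}a', \\
\gamma'_{r}(r) &= \alpha(\gamma_{r}(r)) = r^{-1}, \\
\gamma'_{r}(a') &= \alpha(\gamma_{r}(a)) = \alpha(r^{k-1}a) = r^{k-1}a'.
\end{align*}
These are exactly the conditions of Proposition \ref{prop_di_cyc_necessary}, with the same $ k $ and with $ a = r^{-k+1}s $.

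There is no serious obstacle. The only point needing care is the conjugation formula for $ \gamma' $ together with the fact that $ \alpha $ fixes every rotation. That fact is what guarantees both that $ H $ and its operation are unchanged and that the exponent $ k $ is preserved rather than altered.
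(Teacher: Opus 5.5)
Your proof is correct and follows the same route as the paper: transport of structure along $\alpha$, together with the fact that $\alpha$ fixes $r$, so that $a'=\alpha(r^{-k}s)=r^{-k+1}s$ satisfies $a'\ast' a'=\alpha(r^{2})=r^{2}$. The paper stops at that point and lets Proposition \ref{prop_di_cyc_necessary} supply the $\gamma$-values. Your conjugation formula $\gamma'_{x}=\alpha\gamma_{\alpha^{-1}(x)}\alpha^{-1}$ instead checks all four values directly and shows explicitly that the parameter $k$ is unchanged, which the paper relies on later but leaves implicit.
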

\begin{proof}
The definition of $ \ast' $ implies immediately that $ (G,\cdot,\ast') $ is a skew brace and $ \alpha : (G,\cdot,\ast) \rightarrow (G,\cdot,\ast') $ is an isomorphism. Since $ r^{-k}s \ast r^{-k}s = r^{2} $ in $ \mathscr{C}_{k} $, and $ \alpha(r^{2})=r^{2} $, we have 
\[ r^{-k+1}s \ast' r^{-k+1}s = \alpha( r^{-k}s \ast r^{-k}s ) = \alpha(r^{2}) = r^{2}. \]
Hence $ (G,\cdot,\ast') $ realises the conditions described in Proposition \ref{prop_di_cyc_necessary} with $ a=r^{-k+1}s $. 
\end{proof}

Repeating this process we see that given $ k \in K_{n} $ we may construct a skew brace realising the conditions described in Proposition \ref{prop_di_cyc_necessary} for every $ a \in G - H $, and that these skew braces are all isomorphic. 

\begin{remark} \label{rmk_C_k_alpha_2}
We note that if $ n/2 $ is even then the automorphism of $ \alpha^{n/2} \in \Aut(G^{\bullet}) $ is a skew brace automorphism of $ \mathscr{C}_{k} $, whereas if $ n/2 $ is odd then it is not. This distinction will be important in Section \ref{sec_HGS}, where we will apply our results in the context of Hopf-Galois theory. 
\end{remark}

The next proposition shows that these are the only skew brace isomorphisms amongst these skew braces. 

\begin{proposition} \label{prop_di_cyc_isomorphism_2}
Let $ k \in K_{n} $ and let $ \mathscr{C}_{k} $ be defined as in Proposition \ref{prop_di_cyc_realise}. Then the automorphisms $ \beta_{\ell} $ (with $ (\ell,n)=1 $) are skew brace automorphisms of $ \mathscr{C}_{k} $. 
\end{proposition}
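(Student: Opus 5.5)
We will verify directly that $ \beta_{\ell}(x \ast y) = \beta_{\ell}(x) \ast \beta_{\ell}(y) $ for all $ x,y \in G $ and each $ \ell $ coprime to $ n $. The idea is to write $ \ast $ in terms of $ \circ $ (the operation of $ \mathscr{A}_{-k} $) and the automorphism $ \beta_{-1} $, as in the proof of Proposition \ref{prop_di_cyc_realise}. Then we reduce everything to Proposition \ref{prop_d_n_isomorphism}, which says that every automorphism of $ G^{\bullet} $ respects $ \circ $.

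Write $ x = r^{i}s^{u} $ and $ y = r^{j}s^{v} $, so that $ x \ast y = x \circ \beta_{-1}^{i}(y) $. Applying $ \beta_{\ell} $ and using Proposition \ref{prop_d_n_isomorphism} (with $ -k \in K_{n} $) gives
\[ \beta_{\ell}(x \ast y) = \beta_{\ell}(x) \circ \beta_{\ell}\beta_{-1}^{i}(y). \]
The automorphisms $ \beta_{\ell} $ and $ \beta_{-1} $ commute, since both fix $ s $ and act on $ r $ by powers. Hence the right-hand side equals $ \beta_{\ell}(x) \circ \beta_{-1}^{i}(\beta_{\ell}(y)) $.

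On the other hand, $ \beta_{\ell}(x) = r^{\ell i}s^{u} $, so by definition
\[ \beta_{\ell}(x) \ast \beta_{\ell}(y) = \beta_{\ell}(x) \circ \beta_{-1}^{\ell i}(\beta_{\ell}(y)). \]
It therefore suffices that $ \beta_{-1}^{\ell i} = \beta_{-1}^{i} $. Two facts give this. First, $ \beta_{-1} $ has order dividing $ 2 $. Second, $ \ell $ is odd: in this section $ n $ is even, because $ H^{\ast} $ is dihedral of order $ n $, and $ \ell $ is coprime to $ n $. Hence $ \ell i \equiv i \pmod 2 $.

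There is one point to check. The exponent $ i $ of $ r $ in $ x $ is only defined modulo $ n $, so $ \beta_{-1}^{i} $ must be well defined. This holds because $ n $ is even.

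We expect no real obstacle here. The only things requiring care are the parity argument just described and the commutation of $ \beta_{\ell} $ with $ \beta_{-1} $, both of which are immediate. Together with Proposition \ref{prop_di_cyc_isomorphism_1}, this establishes that the $ \beta_{\ell} $ are skew brace automorphisms of $ \mathscr{C}_{k} $.
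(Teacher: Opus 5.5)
Your proof is correct and follows the paper's own argument: you rewrite $x \ast y$ as $x \circ \beta_{-1}^{i}(y)$ in $\mathscr{A}_{-k}$, apply Proposition \ref{prop_d_n_isomorphism}, commute $\beta_{\ell}$ past $\beta_{-1}^{i}$, and use that $\ell$ is odd to get $\beta_{-1}^{\ell i} = \beta_{-1}^{i}$; your check that $\beta_{-1}^{i}$ is well defined modulo $n$ is a detail the paper leaves implicit. The closing appeal to Proposition \ref{prop_di_cyc_isomorphism_1} is unnecessary, since that result plays no role in this verification.
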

\begin{proof}
By Proposition \ref{prop_d_n_isomorphism} each $ \beta_{\ell} $ is an automorphism of $ G^{\circ} $. Now for $ x=r^{i}s^{u}, y=r^{j}s^{v} \in G $ we have
\begin{align*}
\beta_{\ell}(x \ast y) &= \beta_{\ell}( x \circ \beta_{-1}^{i}(y) ) \\
&= \beta_{\ell}(x) \circ \beta_{\ell}\beta_{-1}^{i}(y) \tag{ $ \beta_{\ell} $ and $ \beta_{-1} $ commute inside $ \Aut(G^{\bullet}) $}\\
&= \beta_{\ell}(x) \circ \beta_{-1}^{\ell i}\beta_{\ell}(y) \tag{ since $ \beta_{-1} $ has order $ 2 $ and $ \ell $ is odd} \\
&= \beta_{\ell}(x) \ast \beta_{\ell}(y).
\end{align*}
Hence $ \beta_{\ell} $ is a skew brace automorphism of $ \mathscr{C}_{k} $.
\end{proof}

\begin{corollary}
For $ k,k' \in K_{n} $ we have $ \mathscr{C}_{k'} \cong \mathscr{C}_{k} $ if and only if $ k = k' $. 
\end{corollary}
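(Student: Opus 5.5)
The plan is to extract an invariant of $\mathscr{C}_{k}$ that is preserved by skew brace isomorphisms and recovers $k$. The \emph{if} direction is trivial, so suppose $\theta : \mathscr{C}_{k} \rightarrow \mathscr{C}_{k'}$ is an isomorphism of skew braces. Here $\theta \in \Aut(G^{\bullet})$ and $\theta(x \ast y) = \theta(x) \ast' \theta(y)$, and therefore $\gamma'_{\theta(x)} = \theta \gamma_{x} \theta^{-1}$ for all $x \in G$. Since $H = \langle r \rangle_{\bullet}$ is characteristic in $G^{\bullet}$, we have $\theta(H) = H$ and $\theta(r) = r^{\ell}$ for some $\ell$ coprime to $n$.

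By Proposition \ref{prop_di_cyc_necessary}, every $a \in G - H$ with $a \ast a = r^{2}$ satisfies $\gamma_{a}(r) = r^{k}$ in $\mathscr{C}_{k}$. I first check that the value $k$ is in fact independent of the choice of such $a$. The element $a$ is a rotation of $G^{\ast}$ of order $n$. Any other element $a' \notin H$ with $a' \ast a' = r^{2}$ differs from $a$ by a rotation lying in $H$, so $a' = a \ast r^{2t}$ for some $t$. Using $\gamma_{r^{2}}(r) = r$ from the proof of Proposition \ref{prop_di_cyc_necessary}, we get
\[ \gamma_{a'}(r) = \gamma_{a}\gamma_{r^{2t}}(r) = r^{k}. \]
A cleaner invariant is the following: $\gamma_{x}(r) = r^{k}$ for \emph{every} $x \in G - H$. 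To see this, note that each $x \notin H$ can be written $x = a \ast h$ with $h \in H$. Moreover $\gamma_{h}(r) = r^{\pm 1}$ by \eqref{eqn_di_cyc_1}, since $\gamma_{r}(r) = r^{-1}$ and $\gamma_{r^{2}}(r) = r$. Hence $\gamma_{x}(r) = r^{\pm k}$. So the set $\{\pm k\}$ is certainly an invariant, but to pin down $k$ itself I will use the more refined invariant $\gamma_{x}(r)$ for those $x \notin H$ with $x \ast x = r^{2}$.

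Now apply $\theta$. Choose $a$ with $a \ast a = r^{2}$ in $\mathscr{C}_{k}$. Then $\theta(a) \ast' \theta(a) = \theta(r^{2}) = r^{2\ell}$. I need to relate $r^{2\ell}$ back to $r^{2}$. The rotations of $G^{\ast'}$ that square-generate $\langle r^{2} \rangle$ are precisely the elements $b \notin H$ with $b \ast' b = r^{2m}$ for some $m$ coprime to $n/2$. For such $b$, the element $b' = b^{\ast' j}$ with $2j-1$ chosen suitably satisfies $b' \ast' b' = r^{2}$, and $\gamma'_{b'}(r) = r^{k'}$. I will show that $\gamma'_{b}(r) = r^{k'}$ already holds for every rotation $b \notin H$ of order $n$ in $G^{\ast'}$. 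The odd powers of $b'$ give all rotations outside $H$, and $\gamma'_{b'^{\ast'(2j+1)}}(r) = \gamma_{b'}\gamma_{r^{2j}}(r) = r^{k'}$. Then
\[ r^{k'\ell} = \gamma'_{\theta(a)}(\theta(r)) = \theta(\gamma_{a}(r)) = \theta(r^{k}) = r^{k\ell}, \]
and since $\ell$ is a unit, $k = k'$.

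The main obstacle is this last identification: I must verify that the rotations of $G^{\ast}$ lying outside $H$ all have the same $\gamma$-value on $r$, and that $\theta$ sends rotations outside $H$ to rotations outside $H$. The second point holds because $\theta$ is a group isomorphism $G^{\ast} \rightarrow G^{\ast'}$ that preserves $H$; when $n = 4$ the rotation/reflection split is still determined, by the preceding lemma. The first point rests on the identity $\gamma_{r^{2}}(r) = r$ together with the fact that, in $G^{\ast}$, the rotations outside $H$ are exactly the $\ast$-odd powers of $a$.
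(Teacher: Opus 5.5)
Your argument is correct, but it takes a different route from the paper.

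\textbf{The paper's route.} The paper reads the corollary off Propositions \ref{prop_di_cyc_isomorphism_1} and \ref{prop_di_cyc_isomorphism_2}. Every $\theta\in\Aut(G^{\bullet})$ is built from $\alpha$ and the $\beta_{\ell}$. The $\beta_{\ell}$ are automorphisms of $\mathscr{C}_{k}$, and transport along $\alpha$ only moves the element $a$ of Proposition \ref{prop_di_cyc_necessary} while keeping $k$.

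\textbf{Your route.} You instead use an intrinsic invariant: in $\mathscr{C}_{k}$, $\gamma_{x}(r)=r^{k}$ for every $\ast$-rotation $x\notin H$. These elements are exactly the $a\ast r^{2j}$, and $\gamma_{r^{2}}(r)=r$. Transporting by $\gamma'_{\theta(x)}=\theta\gamma_{x}\theta^{-1}$, $\theta$ preserves $H$ and the rotation subgroup, and $\theta|_{H}$ is a power map commuting with $r\mapsto r^{k}$, so the exponent cannot change.

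\textbf{What each buys.} The paper's route yields more: it computes $\Aut(\mathscr{C}_{k})$, which Section \ref{sec_HGS} needs. Yours bypasses the generator computations. It also makes explicit something the paper leaves tacit: the $k$ of Proposition \ref{prop_di_cyc_necessary} does not depend on the choice of $a$. Without this, knowing that transport along $\alpha^{t}\beta_{\ell}$ gives a brace realising $k$ ``for some $a$'' does not exclude that the same brace realises $k'$ for another $a$. Restricting to $\ast$-rotations is essential here, because $\ast$-reflections outside $H$ give $r^{-k}$; for example $\gamma_{s}(r)=r^{-k}$ in $\mathscr{C}_{k}$.

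\textbf{Things to tidy.}
\begin{itemize}
\item That last fact means your sentence asserting $\gamma_{x}(r)=r^{k}$ for \emph{every} $x\in G-H$ is false. You correct it to $r^{\pm k}$ at once, but you should delete it.
\item Your characterisation of the rotations whose square generates $\langle r^{2}\rangle$ as the elements outside $H$ fails when $n\equiv 2 \pmod 4$: $r^{1+n/2}\in H$ and $r^{1+n/2}\ast r^{1+n/2}=r^{2}$. You never actually need it.
\item For the base point, take $b_{0}=r^{-k'}s$. By Proposition \ref{prop_di_cyc_realise} it satisfies $b_{0}\ast' b_{0}=r^{2}$ and $\gamma'_{b_{0}}(r)=r^{k'}$. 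Cite that proposition rather than Proposition \ref{prop_di_cyc_necessary}, which only yields \emph{some} exponent. The $\ast'$-rotations outside $H$ are then exactly the $b_{0}\ast' r^{2j}$.
\item The rotation subgroup of a dihedral group of order $2n$ is characteristic for all $n\ge 3$, being the unique cyclic subgroup of order $n$. So $n=4$ needs no special appeal to the preceding lemma, and $n=2$ is trivial since $|K_{2}|=1$.
\end{itemize}
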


Continuing our study of opposite skew braces from Proposition \ref{prop_A_k_opposite} and Proposition \ref{prop_B_k_opposite}, we have

\begin{proposition} \label{prop_B_k_opposite}
For $ k \in K_{n} $ we have $ \mathscr{C}_{k}^{op} \cong \mathscr{C}_{-k} $.  
\end{proposition}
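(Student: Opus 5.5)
I will follow the pattern of the proofs of the two earlier opposite results. Let $ \ast_{k} $ be the multiplicative operation of $ \mathscr{C}_{k} $ and $ \ast_{k}' $ the transported opposite operation from \eqref{eqn_opposite}, so that $ x \ast_{k}' y = (x^{-1} \ast_{k} y^{-1})^{-1} $. The aim is to show that $ \ast_{k}' = \ast_{-k} $ identically. Then the identity map is an isomorphism $ (G,\cdot,\ast_{k}') \cong \mathscr{C}_{-k} $, and hence $ \mathscr{C}_{k}^{op} \cong \mathscr{C}_{-k} $.

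Write $ x = r^{i}s^{u} $ and $ y = r^{j}s^{v} $. By definition $ \ast_{k} $ is built from $ \circ_{-k} $, the operation of $ \mathscr{A}_{-k} $, via $ x \ast_{k} y = x \circ_{-k} \beta_{-1}^{i}(y) $. Also $ x^{-1} = r^{(-1)^{u+1}i}s^{u} $, whose rotation exponent is $ \pm i $, so its parity is that of $ i $. Hence
\[ x \ast_{k}' y = \left( x^{-1} \circ_{-k} \beta_{-1}^{i}(y^{-1}) \right)^{-1} = \left( x^{-1} \circ_{-k} (\beta_{-1}^{i}(y))^{-1} \right)^{-1}, \]
using that $ \beta_{-1}^{i} $ is a group automorphism of $ G^{\bullet} $ and so commutes with inversion. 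The right-hand side is exactly $ x \circ_{-k}' \beta_{-1}^{i}(y) $, where $ \circ_{-k}' $ is the opposite operation of $ \mathscr{A}_{-k} $ in the sense of \eqref{eqn_opposite}. The computation in the proof of Proposition \ref{prop_A_k_opposite} shows that $ \circ_{-k}' = \circ_{k} $ on the nose, not merely up to isomorphism. Therefore
\[ x \ast_{k}' y = x \circ_{k} \beta_{-1}^{i}(y), \]
and this is precisely the defining formula for $ \ast_{-k} $, which is built from $ \mathscr{A}_{-(-k)} = \mathscr{A}_{k} $.

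The only delicate point is the parity bookkeeping. The twist $ \beta_{-1}^{i} $ in $ \ast_{k} $ applied to $ x^{-1} $ uses the rotation exponent of $ x^{-1} $ rather than that of $ x $, and I need these two exponents to have the same parity. This holds since the exponents are $ \pm i $. I also need that Proposition \ref{prop_A_k_opposite} gives an equality of operations, which its proof establishes. I do not expect any step to be a serious obstacle; the main thing to check is these sign and parity conventions.
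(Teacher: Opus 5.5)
Your proposal is correct and follows the same route as the paper: you unwind $x \ast_k' y$ through the definition of $\ast_k$ in terms of $\mathscr{A}_{-k}$ and the twist $\beta_{-1}^{i}$, apply the exact equality of operations established in the proof of Proposition \ref{prop_A_k_opposite}, and recognise the defining formula of $\ast_{-k}$. Your bookkeeping is slightly more careful than the paper's displayed computation. You check that $x^{-1}$ has rotation exponent of the same parity as $i$, and you correctly take the opposite of $\circ_{-k}$ to be $\circ_{k}$; the paper writes the subscripts $k$ and $-k$ the other way round in that display.
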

\begin{proof}
Let $ \ast_{k} $ be the multiplicative operation in $ \mathscr{C}_{k} $ and 
$ \ast_{k}' $ be as described in \eqref{eqn_opposite}. Then we have
\begin{align*}
r^{i}s^{u} \ast_{k}' r^{j}s^{v} &= \left( r^{(-1)^{u}i}s^{u} \circ_{k}  r^{(-1)^{i+v}j}s^{v} \right)^{-1} \\
&= r^{i}s^{u} \circ_{-k}  r^{(-1)^{i}j}s^{v} \tag{by Proposition \ref{prop_A_k_opposite} } \\
&= r^{i}s^{u} \ast_{-k} r^{j}s^{v}
\end{align*}
Hence $ \mathscr{C}_{k}^{op} \cong \mathscr{C}_{-k} $.
\end{proof}

\begin{theorem} \label{thm_di_cyc_count}
Let $ n $ be even. Then there are precisely $ |K_{n}| $ isomorphism classes of bidihedral skew braces $ (G,\cdot,\circ) $ of order $ 2n $ in which the subgroup $ H^{\circ} $ is dihedral.
\end{theorem}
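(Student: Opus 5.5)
The plan mirrors the proof of Theorem \ref{thm_bicyclic_count}. I will produce a map from isomorphism classes to $ K_{n} $, show it is surjective, and show it is injective.

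\textbf{Invariant.} Let $ (G,\cdot,\ast) $ be a bidihedral skew brace of order $ 2n $ with $ H^{\ast} $ dihedral. By the lemmas preceding Proposition \ref{prop_di_cyc_necessary}, the restriction of $ \ast $ to $ H $ is given by \eqref{eqn_di_cyc_1}, and there is some $ a \in G - H $ with $ a \ast a = r^{2} $. Proposition \ref{prop_di_cyc_necessary} then says the $ \gamma $-function is determined by $ a $ together with some $ k \in K_{n} $ satisfying $ \gamma_{a}(r) = r^{k} $.

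To apply Lemma \ref{lem_gamma_fn_generators}, I first check that $ \{a,r\} $ generates both groups.
\begin{itemize}
\item In $ G^{\bullet} $: $ a $ is a reflection and $ r $ is a rotation of order $ n $, so they generate.
\item In $ G^{\ast} $: $ r $ is a reflection and $ a $ is a rotation. Since $ a \ast a = r^{2} $ has order $ n/2 $, the rotation $ a $ has order $ n $. A rotation of order $ n $ together with a reflection generates $ D_{n} $.
\end{itemize}
So Lemma \ref{lem_gamma_fn_generators} applies, and $ \gamma $, hence $ \ast $ (since $ x \ast y = x\gamma_{x}(y) $), is determined by the pair $ (a,k) $.

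\textbf{Surjectivity.} By Proposition \ref{prop_di_cyc_realise} and Proposition \ref{prop_di_cyc_isomorphism_1}, iterated with powers of $ \alpha $, every pair $ (a,k) $ with $ a \in G - H $ arises from a skew brace isomorphic to $ \mathscr{C}_{k} $. One step needs checking: $ \alpha $ fixes $ r $ and $ r^{2} $, and conjugating by $ \alpha $ leaves $ \gamma_{a}(r) = r^{k} $ unchanged, because $ \gamma_{\alpha(x)} = \alpha\gamma_{x}\alpha^{-1} $ and $ \alpha $ acts trivially on $ H $. Therefore every such skew brace is isomorphic to some $ \mathscr{C}_{k} $.

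\textbf{Injectivity.} I need $ \mathscr{C}_{k} \not\cong \mathscr{C}_{k'} $ for $ k \neq k' $. Rather than enumerate automorphisms, I will show that $ k $ is an isomorphism invariant. Let $ \theta \in \Aut(G^{\bullet}) $ be an isomorphism $ \mathscr{C}_{k} \to \mathscr{C}_{k'} $.
\begin{itemize}
\item $ \theta $ preserves $ H $ and acts on it as $ r \mapsto r^{\ell} $ with $ \ell $ a unit.
\item $ \theta $ sends $ a $ to an element $ a' \notin H $ with $ a' \ast' a' = \theta(r^{2}) = r^{2\ell} $.
\item Since $ \gamma'_{\theta(x)} = \theta\gamma_{x}\theta^{-1} $, we get $ \gamma'_{a'}(r) = r^{k} $.
\end{itemize}
In $ \mathscr{C}_{k'} $, every $ x \notin H $ acts on $ H $ by the same exponent, because $ \gamma $ is a homomorphism and $ \gamma_{h}|_{H} = \pm 1 $-powers. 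More precisely, $ \gamma_{h}|_{H} $ is the map $ r \mapsto r^{(-1)^{i}} $ for $ h = r^{i} $. Hence the elements outside $ H $ act by $ r \mapsto r^{\pm k'} $.

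The \textbf{main obstacle} is ruling out the sign: I must show the case $ r^{-k'} $ does not produce $ k = -k' $. I would refine the invariant to: $ k $ is the exponent by which $ \gamma_{x} $ acts on $ H $ for those $ x \notin H $ that are rotations of order $ n $ in $ G^{\ast} $. These are exactly the elements $ a \ast r^{2i} $, equivalently $ a \ast h $ with $ h $ an even power of $ r $. Since $ \gamma_{r^{2i}} $ acts trivially on $ H $ by \eqref{eqn_di_cyc_1}, all of them act by $ r^{k} $. The reflections outside $ H $ are $ a \ast r^{\mathrm{odd}} $, and they act by $ r^{-k} $.

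Because $ \theta $ maps rotations of $ G^{\ast} $ to rotations of $ G^{\ast'} $, this refined exponent is preserved, so $ k = k' $. Combined with Proposition \ref{prop_di_cyc_isomorphism_2}, this gives exactly $ |K_{n}| $ isomorphism classes.
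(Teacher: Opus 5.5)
Your argument is correct. Its reduction and realisation steps match the paper's proof: you reduce to the data $(a,k)$ via Proposition \ref{prop_di_cyc_necessary} and Lemma \ref{lem_gamma_fn_generators}, then realise each $(a,k)$ by transporting $\mathscr{C}_{k}$ along powers of $\alpha$. You are a little more careful than the paper here. You check that $\{a,r\}$ generates $G^{\ast}$, and that $\alpha$-transport preserves $\gamma_{a}(r)=r^{k}$; Proposition \ref{prop_di_cyc_isomorphism_1} leaves the latter implicit.

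The real difference is the distinctness step. The paper cites Proposition \ref{prop_di_cyc_isomorphism_2} (the $\beta_{\ell}$ are automorphisms of $\mathscr{C}_{k}$). From this, every $\theta=\alpha^{j}\beta_{\ell}$ carries $\mathscr{C}_{k}$ to a brace with the same $k$ and a different $a$. But concluding $\mathscr{C}_{k}\not\cong\mathscr{C}_{k'}$ also requires that a single brace cannot be described by two different values of $k$. The paper leaves this implicit; the Corollary after Proposition \ref{prop_di_cyc_isomorphism_2} is stated without proof.

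Your intrinsic invariant supplies exactly this. The invariant is the unit by which $\gamma_{x}$ acts on the cyclic group $H^{\bullet}$, for $x$ a rotation of $G^{\ast}$ outside $H$. It is preserved because $\theta$ fixes $H$, $\Aut(H^{\bullet})$ is abelian, and $\theta$ maps rotations to rotations. It equals $k$ on $\mathscr{C}_{k}$ because those rotations are the elements $a\ast r^{2i}$, and $\gamma_{r^{2i}}$ restricts to $\id$ on $H$. So your route actually proves that Corollary. It also makes Proposition \ref{prop_di_cyc_isomorphism_2} unnecessary for the count; that result is only needed later, to compute $|\Aut(\mathscr{C}_{k})|$ in Section \ref{sec_HGS}.

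Three small points to tidy.
\begin{enumerate}
\item The elements $a\ast r^{2i}=a^{\ast(2i+1)}$ are exactly the rotations outside $H$, not the rotations of order $n$. For example, when $n=6$, $a^{\ast 3}$ has order $2$. Your argument only uses the former description, so phrase the invariant that way.
\item The step ``$a\ast a$ has order $n/2$, so $a$ has order $n$'' needs $a\notin H$ when $n/2$ is odd. Alternatively, for generation just note that $\langle a,r\rangle_{\ast}\supseteq\langle r,r^{2}\rangle_{\ast}=H$ and $a\notin H$.
\item When $n=2$, rotations are not canonical in the Klein four-group, but $K_{2}=\{1\}$, so there is nothing to prove.
\end{enumerate}
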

\begin{proof}
If $ G=(G,\cdot,\ast) $ is a bidihedral skew brace in which the subgroup $ H^{\circ} $ is dihedral then by Proposition \ref{prop_di_cyc_necessary} the $ \gamma $-function of $ G $ satisfies 
\begin{align*}
\gamma_{a}(r) &= r^{k} \mbox{ with } k^{2} \equiv 1 \pmod{n} \\
\gamma_{a}(a) &= r^{-2}a \\
\gamma_{r}(r) &= r^{-1} \\
\gamma_{r}(a) &= r^{k-1}a,
\end{align*}
for some $ k \in K_{n} $ and $ a \in G - H $. Given $ k \in K_{n} $, Proposition \ref{prop_d_n_2_realise} shows that there is a skew brace that realises $ k $ for some $ a \in G - H $, Proposition \ref{prop_di_cyc_isomorphism_1} shows that all such $ a $ yield isomorphic skew braces, and Proposition \ref{prop_di_cyc_isomorphism_2} shows that there are no further isomorphisms amongst these skew braces. Therefore we obtain $ |K_{n}| $ isomorphism classes of skew braces with these properties. 
\end{proof}

Combining Theorem \ref{thm_di_cyc_count} and Theorem \ref{thm_bicyclic_count}, we obtain Theorem \ref{thm_main_count}:

\begin{proof}[Proof of Theorem \ref{thm_main_count}.]
For all $ n \in \N $ the skew braces $ \mathscr{A}_{k} $ form a family of $ |K_{n}| $ pairwise nonisomorphic bidihedral skew braces. 

If $ n $ is odd then these account for all bidihedral skew braces. 

If $ n $ is even then the skew braces $ \mathscr{C}_{k} $ form a further family of $ |K_{n}| $ pairwise nonisomorphic bidihedral skew braces. 

If $ n \equiv 0 \pmod{8} $ then in addition the skew braces $ \mathscr{B}_{k} $ form a further family of $ |K_{n}|/2 $ pairwise nonisomorphic bidihedral skew braces. 

\end{proof}

%\section{Solutions of the set-theoretic Yang-Baxter equation} \label{sec_YBE}

\section{Hopf-Galois structures of dihedral type on dihedral Galois extensions} \label{sec_HGS}

One of the most fruitful avenues of research in skew brace theory is their connection with \textit{Hopf-Galois theory} (see in particular \cite{HAGMT}, \cite{ST23a}). In this section we show how the results of the previous sections can be used to give a new approach to the classification of \textit{Hopf-Galois structures} on dihedral field extensions.  

A Hopf-Galois structure on a finite extension of fields $ E/F $ consists of an $ F $-Hopf algebra $ H $ and a $ F $-linear action of $ H $ on $ E $ such that $ H $-module algebra and the natural $ F $-linear map $ E \otimes_{F} H \rightarrow \End_{F}(E) $ is an isomorphism. Hopf-Galois structures can be used to generalise the techniques of classical Galois theory to extensions that are inseparable or non-normal, and also have applications in algebraic number theory (see \cite{TWE}, \cite{HAGMT}.)

In the case in which $ E/F $ is a finite separable extension, a theorem of Greither and Pareigis \cite{GP87} classifies the Hopf-Galois structures admitted by $ E/F $ in purely group theoretic terms. Specialising further to the case in which $ E/F $ is a Galois extension, this theorem states that there is a bijection between Hopf-Galois structures on $ E/F $ and certain regular subgroups $ N $ of the group $ \Perm(G) $; the isomorphism class of such a subgroup is known as the \textit{type} of the corresponding Hopf-Galois structure. Numerous authors use this framework to classify Hopf-Galois structures on Galois extensions of given order, or with given properties: see for example \cite{Ch96}, \cite{By96}, \cite{Ko98}, \cite{CC99}, \cite{By04c}, \cite{By07}, \cite{Ko13}. In particular, in \cite{Ko20} Kohl enumerates Hopf-Galois structures of type $ D_{n} $ on a $ D_{n} $-Galois extension, as follows: 

\begin{theorem}[Kohl, 2020] \label{thm_Kohl}
Let $ K_{n} = \{ k \in \Z_{n} \mid k^{2} \equiv 1 \pmod{n} \} $. Then the number of Hopf-Galois structures of type $ D_{n} $ on a $ D_{n} $-Galois extension is
\[
\begin{cases}
|K_{n}| & \text{if } n \equiv 1,3,5,7 \pmod{8} \\[6pt]
(n + 1)\,|K_{n}| & \text{if } n \equiv 2,6 \pmod{8} \\[6pt]
\left(\dfrac{n}{2} + 1\right)\,|K_{n}| & \text{if } n \equiv 4 \pmod{8} \\[6pt]
\left(\dfrac{n}{2} + 2\right)\,|K_{n}| & \text{if } n \equiv 0 \pmod{8}.
\end{cases}
\]
\end{theorem}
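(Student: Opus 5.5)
The approach is to use the standard translation between Hopf-Galois structures and skew braces rather than count regular subgroups directly. By the Greither--Pareigis correspondence, together with its reformulation in skew brace language (see \cite{ST23a}), the number of Hopf-Galois structures of type $ N $ on a $ \Gamma $-Galois extension is
\[ \sum_{B} \frac{|\Aut(\Gamma)|}{|\Aut_{\mathrm{Br}}(B)|}, \]
where $ B $ runs over isomorphism classes of skew braces with $ B^{\bullet} \cong N $ and $ B^{\circ} \cong \Gamma $.

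In our setting the additive group is fixed as $ G^{\bullet} $ and the multiplicative operations vary. So each summand is the size of the orbit of the operation $ \circ $ under the conjugation action $ \theta \cdot \circ = \theta \circ \theta^{-1} $ of $ \Aut(G^{\bullet}) $. Equivalently, it is the index in $ \Aut(G^{\bullet}) $ of the stabiliser, which is the skew brace automorphism group. Here $ N = \Gamma = D_{n} $, so the sum runs over the representatives $ \mathscr{A}_{k} $, $ \mathscr{B}_{k} $, $ \mathscr{C}_{k} $ from the proof of Theorem \ref{thm_main_count}. It then remains to compute each stabiliser inside $ \Aut(G^{\bullet}) = \langle \alpha, \beta_{\ell} \rangle $, which has order $ n\varphi(n) $ for $ n \geq 3 $. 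The small cases $ n=1,2 $ would be checked directly.

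For $ \mathscr{A}_{k} $, Proposition \ref{prop_d_n_isomorphism} shows the stabiliser is all of $ \Aut(G^{\bullet}) $. Each such brace therefore contributes $ 1 $, giving $ |K_{n}| $ in total. For $ \mathscr{B}_{k} $ with $ 8 \mid n $, Proposition \ref{prop_d_n_2_isomorphism} shows that $ \alpha^{2} $ and every $ \beta_{\ell} $ lie in the stabiliser. On the other hand, $ \alpha $ carries $ \star_{k} $ to $ \star_{k+n/2} $. These are distinct operations because $ k+n/2 \neq k $ in $ \Z_{n} $; I would confirm this by comparing $ \gamma_{s}(r) $. Hence the stabiliser has index exactly $ 2 $. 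Each of the $ |K_{n}|/2 $ classes contributes $ 2 $, giving another $ |K_{n}| $.

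The main work is the family $ \mathscr{C}_{k} $ for $ n $ even. By Proposition \ref{prop_di_cyc_isomorphism_2} every $ \beta_{\ell} $ stabilises $ \ast $, so the stabiliser is $ \langle \alpha^{j} \rangle \rtimes \{\beta_{\ell}\} $ for the appropriate subgroup $ \langle \alpha^{j} \rangle $ of $ \langle \alpha \rangle \cong \Z_{n} $. Proposition \ref{prop_di_cyc_isomorphism_1} shows that $ \alpha $ moves the distinguished element $ a $ (with $ a \ast a = r^{2} $ and $ a \notin H $) to $ ra $, and these $ a $ exhaust $ G-H $. An operation $ \ast $ is therefore stabilised by $ \alpha^{j} $ exactly when $ r^{j}a $ is also a square root of $ r^{2} $ in $ G^{\ast} $ lying outside $ H $. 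By Proposition \ref{prop_di_cyc_necessary} the $ \gamma $-function, and hence $ \ast $, is determined by $ k $ and any such $ a $. In the dihedral group $ G^{\ast} $, the rotation $ r^{2} $ of order $ n/2 $ has exactly two square roots, namely $ \rho $ and $ \rho \ast \rho^{\ast n/2} $ for a generating rotation $ \rho $. The key step is to locate these two roots.

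Within $ H $, the formula $ r^{i} \ast r^{i} = r^{2i} $ for even $ i $ shows that $ r^{1+n/2} $ is a square root of $ r^{2} $ whenever $ n/2 $ is odd. In that case only one square root lies in $ G-H $, so the $ \alpha $-part of the stabiliser is trivial and $ \mathscr{C}_{k} $ contributes $ n $. When $ n/2 $ is even, no element of $ H $ squares to $ r^{2} $, so both roots lie in $ G-H $. The stabiliser then contains $ \alpha^{n/2} $, in line with Remark \ref{rmk_C_k_alpha_2}, and the contribution is $ n/2 $.

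Summing the contributions gives the result. For $ n $ odd the total is $ |K_{n}| $. For $ n \equiv 2,6 \pmod 8 $ it is $ (1+n)|K_{n}| $. For $ n \equiv 4 \pmod 8 $ it is $ (1+n/2)|K_{n}| $. For $ n \equiv 0 \pmod 8 $ it is $ (1+1+n/2)|K_{n}| $. These agree with Theorem \ref{thm_Kohl}. The step I expect to be most delicate is the claim that $ \alpha^{j} $ stabilises $ \ast $ only if $ r^{j}a $ is a square root of $ r^{2} $. Proving it requires checking that $ \alpha^{j} $ fixes $ r^{2} $ (it does) and that it preserves the parameter $ k $. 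The latter holds because conjugating by $ \alpha^{j} $ changes $ \gamma_{a}(r) $ into $ \gamma_{\alpha^{j}(a)}(r) = r^{k} $, since $ \alpha $ fixes $ r $.
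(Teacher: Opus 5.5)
Your proposal follows the paper's own proof. Like the paper, you sum the correction factor $|\Aut(G^{\circ})|/|\Aut(B)|$ over the representatives $\mathscr{A}_k$, $\mathscr{B}_k$, $\mathscr{C}_k$, obtaining contributions $1$, $2$, and $n$ or $n/2$. Your square-root-of-$r^2$ analysis for the $\mathscr{C}_k$ stabiliser, and your check that $\star_k\neq\star_{k+n/2}$, supply justification that the paper leaves to Remark \ref{rmk_C_k_alpha_2} and Proposition \ref{prop_d_n_2_isomorphism}. Two points need correcting.

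First, the case $n=2$ will not ``check out directly.'' Here $D_2\cong V_4$ and $|\Aut(V_4)|=6$. The only regular subgroup of $\Perm(V_4)\cong S_4$ isomorphic to $V_4$ is the normal Klein subgroup, so the only skew brace with both groups $V_4$ is the trivial one. Hence there is exactly one Hopf--Galois structure, whereas the formula gives $3$. The statement must therefore be read for $n\ge 3$, as in Kohl's paper. Theorem \ref{thm_main_count} similarly fails at $n=2$: it predicts $2$, but $\mathscr{A}_1=\mathscr{C}_1$ there.

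Second, in the $\mathscr{C}_k$ step you have the directions reversed. The implication ``$\alpha^j$ stabilises $\ast$ only if $r^ja$ is a square root of $r^2$'' is immediate from $\alpha^j(r^2)=r^2$, and it alone settles the case $n/2$ odd. The direction that needs care is the converse, which you need to put $\alpha^{n/2}$ into the stabiliser when $n/2$ is even. Your justification there only shows that the \emph{transported} operation satisfies $\gamma_{r^ja}(r)=r^k$. To apply Proposition \ref{prop_di_cyc_necessary} you also need $\gamma_{r^ja}(r)=r^k$ for $\ast$ itself. This is not automatic, since $\gamma_x(r)\in\{r^{k},r^{-k}\}$ for $x\in G-H$. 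It does hold. When $n/2$ is even the central involution of $G^{\ast}$ is $a^{\ast n/2}=r^{n/2}$, so the second root is $a\ast r^{n/2}=r^{n/2}a$; this also identifies $j=n/2$. Then $\gamma_{a\ast r^{n/2}}(r)=\gamma_a(\gamma_{r^{n/2}}(r))=\gamma_a(r)=r^k$, because $\gamma_{r^i}(r)=r^{(-1)^i}$. Alternatively, simply cite Remark \ref{rmk_C_k_alpha_2}.
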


The Greither-Pareigis classification is the linchpin of the connection between Hopf-Galois structures on Galois field extensions and the theory of skew braces. As initially observed by Bachiller \cite{Ba16}, and developed by Byott and Vendramin in the appendix to \cite{SV18}, a Galois extension with Galois group $ G^{\circ} $ admits a Hopf Galois structure of type $ N $ if and only if there exists a binary operation $ \cdot $ on $ G $ such that $ G^{\bullet} $ is a group isomorphic to $ N $ and $ (G,\cdot,\circ) $ is a skew brace. Thus Kohl's enumeration of Hopf-Galois structures of type $ D_{n} $ on a $ D_{n} $-Galois extension is connected to our enumeration of isomorphism classes of bidihedral skew braces (Theorem \ref{thm_main_count}). However, the correspondence between Hopf-Galois structures on Galois extensions and isomorphism classes of skew braces is not bijective: by \cite[Corollary 3.2]{KT23} and \cite[Corollary 2.4]{NZ19}, a given representative $ (G,\cdot,\circ) $ of an isomorphism class of skew braces yields 
\begin{equation} \label{eqn_HGS_correction_factor}
\mathrm{HG}(G,\cdot,\circ) = \frac{| \Aut(G^{\circ}) |}{ | \Aut(G,\cdot,\circ) | } 
\end{equation}
distinct Hopf-Galois structures on a Galois extension with Galois group $ G^{\circ} $. Combining this correction factor with Theorem \ref{thm_main_count} we can give an alternative proof of Theorem \ref{thm_Kohl}

\begin{proof}[Proof of Theorem \ref{thm_Kohl} using skew braces]
First we compute the correction factor \eqref{eqn_HGS_correction_factor} corresponding to each family of skew braces $ \mathscr{A}_{k}, \mathscr{B}_{k}, $ and $ \mathscr{C}_{k} $. 
\begin{itemize}
\item By Proposition \ref{prop_d_n_isomorphism} we have $ | \Aut(\mathscr{A}_{k}) | = | \Aut(G^{\circ}) | $ for each $ k $, so $ \mathrm{HG}(\mathscr{A}_{k})=1 $ for each $ k $. 
\item By Proposition \ref{prop_d_n_2_isomorphism} we have $ | \Aut(\mathscr{B}_{k}) | = | \Aut(G^{\circ}) | / 2 $, so $ \mathrm{HG}(\mathscr{B}_{k})= 2 $ for each $ k $. 
\item By Proposition \ref{prop_di_cyc_isomorphism_1}, Proposition \ref{prop_di_cyc_isomorphism_2}, and Remark \ref{rmk_C_k_alpha_2} we have $ | \Aut(\mathscr{C}_{k}) | = \varphi(n) $ (respectively, $ 2\varphi(n) $) if $ n \equiv 2,6 \pmod{8} $ (respectively, $ n \equiv 0,4 \pmod{8} $).  Hence $ \mathrm{HG}(\mathscr{C}_{k})= n $ (respectively, $ \mathrm{HG}(\mathscr{C}_{k})= n/2 $) for each $ k $. 
\end{itemize}

Now we refer to Theorem \ref{thm_main_count} and consider the possible congruence classes of $ n $ modulo $ 8 $ in turn.

\begin{itemize}
\item If $ n \equiv 1,3,5, $ or $ 7 $ modulo $ 8 $ then the $ |K_{n}| $ isomorphism classes of bidihedral skew braces of order $ 2n $ are represented by $ \mathscr{A}_{k} $ for $ k \in K_{n} $. Since $ \mathrm{HG}(\mathscr{A}_{k})=1 $ for each $ k $, there are $ |K_{n}| $ Hopf-Galois structures in this case. 

\item If $ n \equiv 2 $ or $ 6 $ modulo $ 8 $ then the $ 2|K_{n}| $ isomorphism classes of bidihedral skew braces of order $ 2n $ are represented by $ \mathscr{A}_{k} $ (with $ \mathrm{HG}(\mathscr{A}_{k})=1$) and $ \mathscr{C}_{k} $  (with $ \mathrm{HG}(\mathscr{C}_{k})=n $) for $ k \in K_{n} $. Hence there are $ (n+1)|K_{n}| $ Hopf-Galois structures in this case. 

\item If $ n \equiv 4 \pmod{8} $ then the $ 2|K_{n}| $ isomorphism classes of bidihedral skew braces of order $ 2n $ are represented by $ \mathscr{A}_{k} $ (with $ \mathrm{HG}(\mathscr{A}_{k})=1$) and $ \mathscr{C}_{k} $ (with $ \mathrm{HG}(\mathscr{C}_{k})= n/2 $) for $ k \in K_{n} $. Hence there are $ (n/2+1)|K_{n}| $ Hopf-Galois structures in this case. 

\item Finally, if $ n \equiv 0 \pmod{8} $ then the $ (5/2)|K_{n}| $ isomorphism classes of bidihedral skew braces of order $ 2n $ are represented by $ \mathscr{A}_{k} $ (with $ \mathrm{HG}(\mathscr{A}_{k})=1$) and $ \mathscr{C}_{k} $ (with $ \mathrm{HG}(\mathscr{C}_{k})= n/2 $) for $ k \in K_{n} $  and by $ \mathscr{B}_{k} $ (with $ \mathrm{HG}(\mathscr{B}_{k})= 2 $) for $ k \in K_{n} $ and $ k < n/2 $. Hence there are $ (n/2+2)|K_{n}| $ Hopf-Galois structures in this case. 
\end{itemize}

\end{proof}

\bibliography{Omahabib}

@Article{ST23a,
  author    = {Stefanello, L. and Trappeniers, S.},
  journal   = {Bull. Lond. Math. Soc.},
  title     = {On the connection between {H}opf--{G}alois structures and skew braces},
  year      = {2023},
  number    = {4},
  pages     = {1726--1748},
  volume    = {55},
  publisher = {Wiley Online Library},
}

@book{HAGMT,
	author = {L. N. Childs and C. Greither and K. P. Keating and A. Koch and T. Kohl and P. J. Truman and R. Underwood},
	publisher = {{A}merican Mathematical Society},
	series = {Mathematical Surveys and Monographs},
	title = {{H}opf algebras and {G}alois module theory},
	volume = {260},
	year = {2021}}

@article{AcB20,
	author = {E. Acri and M. Bonatto},
	journal = {Comm. Algebra},
	number = {5},
	pages = {1872-1881},
	title = {Skew braces of size $pq$},
	volume = {48},
	year = {2020}}

@article{AB20c,
	author = {A. A. Alabdali and N. P. Byott},
	journal = {J. Algebra Appl.},
	pages = {2150128},
	title = {Skew braces of squarefree order},
	year = {2020}}

@article{Ba15,
	author = {D. Bachiller},
	journal = {J. Pure Appl. Algebra},
	number = {8},
	pages = {3568-3603},
	title = {Classification of braces of order $p^3$},
	volume = {219},
	year = {2015}}

@article{Ba16,
	author = {D. Bachiller},
	journal = {J. Algebra},
	pages = {160-176},
	title = {Counterexample to a conjecture about braces},
	volume = {453},
	year = {2016}}

@article{By04c,
	author = {N. P. Byott},
	journal = {J. Pure Appl. Algebra},
	number = {1-3,2.2},
	pages = {45-57},
	title = {{H}opf-{G}alois structures on {G}alois field extensions of degree $pq$},
	volume = {188},
	year = {2004}}

@article{By96,
	author = {N. P. Byott},
	journal = {Comm. Algebra},
	pages = {3217--3228, 3705},
	title = {Uniqueness of {H}opf {G}alois structure of separable field extensions},
	volume = {24},
	year = {1996}}

@Article{By07,
  author        = {N. P. Byott},
  journal       = {J. Algebra},
  title         = {{H}opf-{G}alois structures on almost cyclic field extensions of 2-power degree},
  year          = {2007},
  number        = {1},
  pages         = {351--371},
  volume        = {318},
}

@article{CC99,
	author = {S. Carnahan and L. N. Childs},
	journal = {J. Algebra},
	pages = {81-92},
	title = {Counting {H}opf {G}alois structures on non-abelian {G}alois extensions},
	volume = {218},
	year = {1999}}

@article{Ch96,
	author = {L. N. Childs},
	journal = {New York J. Math.},
	pages = {86-102},
	title = {{H}opf-{G}alois structures on degree $ p^{2} $ cyclic extensions of local fields},
	volume = {2},
	year = {1996}}

@book{TWE,
	author = {L. N. Childs},
	publisher = {{A}merican Mathematical Society},
	series = {Mathematical Surveys and Monographs},
	shorthand = {TWE},
	title = {Taming Wild Extensions: {H}opf algebras and local {G}alois module theory},
	volume = {80},
	year = {2000}}

@article{GP87,
	author = {C. Greither and B. Pareigis},
	journal = {J. Algebra},
	pages = {239-258},
	title = {{H}opf {G}alois theory for separable field extensions},
	volume = {106},
	year = {1987}}

@article{GV17,
	author = {L. Guarneri and L. Vendramin},
	journal = {Math. Comp.},
	number = {307},
	pages = {2519-2534},
	title = {Skew braces and the {Y}ang-{B}axter equation},
	volume = {86},
	year = {2017}}

@article{KT20,
	author = {A. Koch and P. J. Truman},
	journal = {J. Algebra},
	pages = {218-235},
	title = {Opposite skew left braces and applications},
	volume = {546},
	year = {2020}}

@article{KT23,
	author = {A. Koch and P. J. Truman},
	journal = {J. Algebra Appl.},
	number = {5},
	title = {Skew left braces and isomorphism problems for {H}opf-{G}alois structures on {G}alois extensions},
	volume = {22},
	year = {2023}}

@article{Ko13,
	author = {T. Kohl},
	journal = {Algebra and Number Theory},
	number = {9},
	pages = {2203-2240},
	title = {Regular permutation groups of order $mp$ and {H}opf {G}alois structures},
	volume = {7},
	year = {2013}}

@article{Ko20,
	author = {T. Kohl},
	journal = {J. Algebra},
	pages = {93-115},
	title = {Enumerating Dihedral {H}opf-{G}alois Structures Acting on Dihedral Extensions},
	volume = {542},
	year = {2020}}

@article{Ko98,
	author = {T. Kohl},
	journal = {J. Algebra},
	pages = {525--546},
	title = {Classification of the {H}opf {G}alois structures on prime power radical extensions},
	volume = {207},
	year = {1998}}

@article{NZ19,
	author = {K. {Nejabati Zenouz}},
	journal = {J. Algebra},
	pages = {187-225},
	title = {Skew braces and {H}opf-{G}alois structures of {H}eisenberg type},
	volume = {524},
	year = {2019}}

@article{Ru07a,
	author = {W. Rump},
	journal = {J. Algebra},
	pages = {153--170},
	title = {Braces, radical rings, and the quantum {Y}ang-{B}axter equation},
	volume = {307},
	year = {2007}}

@article{Ru07b,
	author = {W. Rump},
	journal = {J. Pure Appl. Algebra},
	number = {3},
	pages = {671--685},
	title = {Classification of cyclic braces},
	unidentified = {209 no. --685. MR2298848},
	volume = {209},
	year = {2007}}

@article{SV18,
	author = {A. Smoktunowicz and L. Vendramin},
	journal = {J. Comb. Algebra},
	number = {1},
	pages = {47-86},
	title = {On skew braces (with an appendix by {N}. {B}yott and {L}. {V}endramin)},
	volume = {2},
	year = {2018}}

@Article{BF25,
  author   = {N. P. Byott and F. Ferri},
  journal  = {J. Algebra},
  title    = {On the number of quaternion and dihedral braces and {H}opf–{G}alois structures},
  year     = {2025},
  issn     = {0021-8693},
  pages    = {72-102},
  volume   = {665},
  doi      = {https://doi.org/10.1016/j.jalgebra.2024.11.011},
  url      = {https://www.sciencedirect.com/science/article/pii/S0021869324006185},
}

@Unpublished{Str,
  author = {H. Simpson and P.J. Truman},
  note   = {arXiv:2606.23004},
  title  = {Bicyclic biskew braces},
}
\bibliographystyle{plain}

\end{document}